\newtheorem{theorem}{Theorem}[section]
\theoremstyle{plain}
\newtheorem{corollary}{Corollary}[section]
\newtheorem{definition}{Definition}[section]
\newtheorem{example}{Example}[section]
\newtheorem{lemma}{Lemma}[section]
\newtheorem{proposition}{Proposition}[section]
\newtheorem{remark}{Remark}[section]
\numberwithin{equation}{section} \textheight  22 true cm \textwidth  15 true cm \setlength{\oddsidemargin}{0mm} \setlength{\evensidemargin}{0mm}
\begin{document}
\title[Conformal slant submersions in contact geometry]{%
CONFORMAL SLANT SUBMERSIONS IN  CONTACT GEOMETRY}
\author{Y\i lmaz G\"{u}nd\"{u}zalp}
\address{Dicle University, Faculty of  Sciences, Department of Mathematics, 21280, Diyarbak\i r, Turkey}
\email{ygunduzalp@dicle.edu.tr}
\author{Mehmet Akif Akyol}
\address{Bing\"{o}l University, Faculty of Arts and Sciences, Department of Mathematics, 12000, Bing\"{o}l,
Turkey}
\email{mehmetakifakyol@bingol.edu.tr}
\subjclass[2010]{53C15, 53C40.}
\keywords{The second fundamental form of a map, almost contact metric manifold, conformal submersion, slant submersion, conformal slant submersion, horizontal distribution.}

\begin{abstract}
Akyol M.A. [Conformal anti-invariant submersions from cosymplectic manifolds, Hacettepe
Journal of Mathematics and Statistic, 46(2), (2017), 177-192.] defined and studied conformal anti-invariant submersions from cosymplectic manifolds.
The aim of the present paper is to define and study the notion of conformal slant submersions (it means the Reeb vector field $\xi$ is a vertical vector field) from almost contact metric manifolds onto Riemannian manifolds as a generalization of Riemannian submersions, horizontally conformal submersions, slant submersions and conformal anti-invariant submersions. More precisely, we mention lots of examples and obtain the geometries of the leaves of $\ker\pi_{*}$ and $(\ker\pi_{*})^\perp,$ including the integrability of the distributions, the geometry of foliations, some conditions related to totally geodesicness and harmonicty of the submersions. Finally, we consider a decomposition theorem on total space of the new submersion.
\end{abstract}

\maketitle

\section{Introduction}
O'Neill \cite{O} and Gray \cite{Gray} independently studied the notion of Riemannian submersions between Riemannian manifolds in the 1960s. This notion is related with physics and have some applications in the Yang-Mills theory \cite{BL,W1}, Super-gravity and superstring theories \cite{IV1,M} and Kaluza-Klein theory \cite{BL1,IV}. For Riemannian submersions (See also: \cite{SMV,s2}). After that, by using the notion of Riemannian submersion and the condition of almost complex mapping, Watson \cite{watson} introduced almost hermitian submersions. He showed that
under the almost complex structure of the total manifold of the submersion the vertical distribution and the horizontal distribution invariant. 

As a generalization of this notion, \c{S}ahin \cite{s} defined the notion of anti-invariant Riemannian submersions from almost Hermitian manifolds. Afterwards, He also defined slant submersions from almost Hermitian manifolds in \cite{s1}. After that, many geometers study this area and obtain lots of results on the new topic.  (see: \cite{Ag}, \cite{As}, \cite{As1}, \cite{C}, \cite{GIP}, \cite{Gun1}, \cite{Gun2},  \cite{Park2},  \cite{Park4}, \cite{tastan}). Recent developments on the notion of Riemannian submersion could be found in the book, \cite{s3}.

In \cite{Chi4}, Chinea introduced the notion of almost contact Riemannian submersions between almost contact metric manifolds. He obtained the differential geometric properties among total space, fibers and base spaces.

A related topic of growing interest deals with the study of the so called \textit{horizontally conformal submersions}: these maps, which provide a natural generalization of Riemannian submersion, introduced independently Fuglede \cite{F} and Ishihara \cite{I}. As a generalization of holomorphic submersions, the notion of conformal holomorphic submersions  were defined by Gudmundsson and Wood \cite{G, GW} (see also: \cite{A}, \cite{A1}, \cite{As1}, \cite{BW}, \cite{Chi1}, \cite{Chi2}, \cite{Chi3}, \cite{G}, \cite{Ornea}). In 2017, Akyol and \c{S}ahin \cite{As2} defined a conformal slant submersion from an almost Hermitian manifolds onto a riemannian manifold. In this paper, we consider conformal slant submersions from a cosymplectic manifold onto a Riemannian  manifold.\\

The paper is organized as follows. In section 2, we  recall several notions and formulas for other sections.
In third section, we introduce conformal slant submersions from almost contact metric manifolds onto Riemannian manifolds,
mention a lot of examples and investigate the geometry of leaves of $\ker\pi_{*}$ (the vertical distribution) and $(\ker\pi_{*})^\perp$ (the horizontal distribution) and
find necessary and sufficient conditions for a conformal slant submersion to be totally
geodesic and harmonic, respectively. Finally, we consider a decomposition theorem on total space of the new submersion.

\section{Cosymplectic Manifolds}

A $(2n+1)$-dimensional $C^\infty$-manifold $N$ said to have an almost contact structure if there exist
on $N$ a tensor field $\phi$ of type (1,1), a vector field  $\xi$ and 1-form  $\eta$ satisfying:
\begin{equation}
\phi^{2}=-I+\eta\otimes\xi, \ \ \phi\xi=0,\ \ \eta o\phi=0,\ \ \eta(\xi)=1. \label{e.q:2.1}
\end{equation}
There always exists a Riemannian metric $g$ on an almost contact manifold $N$ satisfying the following
conditions
\begin{equation}
g_1(\phi X_1,\phi X_2)=g_1(X_1,X_2)-\eta(X_1)\eta(X_2),\ \ \ \eta(X_1)=g_1(X_1,\xi) \label{e.q:2.2}
\end{equation}
where $X_1,X_2\in\Gamma(TN).$ 

An almost contact structure $(\phi,\xi,\eta)$ is said to be normal if the almost complex structure
$J_1$ on the product manifold $N \times \mathbb{R}$ is given by
$$J_1(X_1,f\frac{d}{dt})=(\phi X_1-f\xi,\eta(X_1)\frac{d}{dt}),$$
where $f$ is a $C^\infty$-function on $N\times \mathbb{R}$ has no torsion i.e., $J_1$ is integrable. The condition
for normality in terms of $\phi,\xi$ and $\eta$ is $[\phi,\phi]+2d\eta\otimes\xi=0$ on $N,$ where
$[\phi,\phi]$ is the Nijenhuis tensor of $\phi.$ Finally, the fundamental two-form $\Phi$ is
defined $\Phi(X_1,\phi X_2)=g_1(X_1,\phi X_2).$

An almost contact metric structure $(\phi,\xi,\eta,g)$ is said to be cosymplectic, if it is normal and both
$\Phi$ and $\eta$ are closed (\cite{B}, \cite{L}), and the structure equation of a cosymplectic manifold is given by
\begin{equation}
(\nabla_{X_1}\phi)X_2=0 \label{e.q:2.3}
\end{equation}
for any $X_1,X_2$ tangent to $N,$ where $\nabla$ denotes the Riemannian connection of the metric $g$ on $N.$ Moreover,
for cosymplectic manifold
\begin{equation}
\nabla_{X_1}\xi=0. \label{e.q:2.4}
\end{equation}
The canonical example of cosymplectic manifold is given by the product $B^{2n}\times \mathbb{R}$ Kaehler manifold $B^{2n}(J,g)$
with the $\mathbb{R}$ real line. Now we will introduce a well known coymplectic manifold example of $\mathbb{R}^{2n+1}.$

\begin{example}(\cite{OL}, \cite{CE})\label{example1}
We consider $\mathbb{R}^{2n+1}$  with cartesian  coordinates
$(u_{i},v_{i},t)$ $(i=1,2,...,n)$  and its usual contact one-form $\eta=dt$.  The Reeb vector field $\xi$ is given by $\frac{\partial}{\partial
t}$ and its Riemannian metric $g_{\mathbb{R}^{2n+1}}$ and tensor field $\phi$ are given by
$$
g_{\mathbb{R}^{2n+1}}=(dt)^{2}+\sum_{i=1}^{n}((du_{i})^{2}+(dv_{i})^{2}),\,\,\,\varphi=
\left(%
\begin{array}{ccc}
  0 &  \delta_{ij} &0 \\
  -\delta_{ij} &0& 0 \\
 0 & 0 & 0 \\
\end{array}%
\right).
$$
This gives a cosymplectic manifold on $\mathbb{R}^{2n+1}.$  The vector fields
$e_i=\frac{\partial}{\partial v_i},$ $e_{n+i}=\frac{\partial}{\partial u_i},$ $\xi$ form a $\phi$-basis for the cosymplectic structure.
On the other hand, it can be shown that $\mathbb{R}^{2n+1}(\phi,\xi,\eta,g)$ is a cosymplectic manifold.
\end{example}

\section{Conformal submersions}
Let $\psi:(N,g_{N})\longrightarrow (B,g_{B})$ be a smooth map between Riemannian manifolds, and let $q\in N$. Then $\psi$ is
said to be \textit{horizontally weakly conformal or semi conformal} at $q$ \cite{BW} if either (i) $d\psi_{q}=0$, or
(ii) $d\psi_{q}$ is surjective and there exists a number $\Lambda(q)\neq0$ such that
$$
g_N(d\psi_{q}X,d\psi_{q}Y)=\Lambda(q)g_M(X,Y)\mbox{ }(X,Y\in\mathcal{H}_{q}).
$$
We call the point $q$ is of type (i) as a critical point if it satisfies the type (i), and we shall call the point $q$ a regular point if it satisfied the type (ii).
At a critical point, $d\psi_{q}$ has rank $0$; at a regular point, $d\psi_{q}$ has rank $n$
and $\psi$ is submersion. Further, the positive number $\Lambda(q)$ is called the \textit{square dilation} (of $\psi$ at $q$). The map $\psi$ is called \textit{horizontally weakly
conformal} or \textit{semi conformal} (on $N$) if it is horizontally weakly
conformal at every point of $N$ and it has no critical point, then we call it a (\textit{horizontally conformal submersion}).

A vector field $X_{1}\in\Gamma(TN)$ is called a basic vector field if $X_{1}\in\Gamma((ker\psi_{*})^\perp)$ and $\psi-$related with a vector field $\bar{X_{1}}\in\Gamma(TB)$
which means that $({\psi_{*}}_{q}X_{1q})=\bar{X_{1}}(\psi(q))\in\Gamma(TB)$ for any $q\in\Gamma(TN).$

Define O'Neill's tensors $T$ and $A$ by
\begin{equation}
A_{X_{1}}X_{2}=v\nabla_{hX_{1}}hX_{2} +v\nabla_{hX_{1}}vX_{2} \label{e.q:2.5}
\end{equation}
\begin{equation}
T_{X_{1}}X_{2}=h\nabla_{vX_{1}}vX_{2} +v\nabla_{vX_{1}}hX_{2} \label{e.q:2.6}
\end{equation}
where for any $X_{1},X_{2}\in\Gamma(TN)$ and $v,$ $h$ are the vertical and horizontal
projections (see \cite{FIP}). And also, by using (\ref{e.q:2.5}) and (\ref{e.q:2.6}), for $X_{1},X_{2}\in\Gamma((ker\psi_{*})^\perp)$ and $V_{1},V_{2}\in\Gamma(ker\psi_{*})$, we have
\begin{equation}
\nabla_{V_{1}}V_{2}=T_{V_{1}}V_{2}+\hat{\nabla}_{V_{1}}V_{2} \label{e.q:2.7}
\end{equation}
\begin{equation}
\nabla_{V_{1}}X_{1}=h\nabla_{V_{1}}X_{1}+T_{V_{1}}X_{1}\label{e.q:2.8}
\end{equation}
\begin{equation}
\nabla_{X_{1}}V_{1}=A_{X_{1}}V_{1} +v\nabla_{X_{1}}V_{1}\label{e.q:2.9}
\end{equation}
\begin{equation}
\nabla_{X_{1}}X_{2}=h\nabla_{X_{1}}X_{2}+A_{X_{1}}X_{2}\label{e.q:2.10}
\end{equation}
where $\hat{\nabla}_{V_{1}}V_{2}=v\nabla_{V_{1}}V_{2}$. If $X_{1}$ is basic, then $h\nabla_{V_{1}}X_{1}=A_{X_{1}}V_{1}$. Then we easily obtain
$-g_{N}(A_{X_{1}}E_{1},E_{2})=g_{N}(E_{1},A_{X_{1}}E_{2})$ and $-g_{N}(T_{V_{1}}E_{1},E_{2})=g_{N}(E_{1},T_{V_{1}}E_{2})$
for all $E_{1},E_{2}\in T_{x}N$. $T$ is exactly the second fundamental form of the fibres of $\psi$. For the special case when
$\psi$ is horizontally conformal we have the following:
\begin{proposition}
\textit{(\cite{G})} Let $\psi:(N,g_{N})\longrightarrow (B,g_{B})$ be a
horizontally conformal submersion with dilation $\lambda$ and $X_{1},X_{2}$ be
horizontal vectors, then
\begin{equation}
A_{X_{1}}X_{2}=\frac{1}{2}\{v[X_{1},X_{2}]-\lambda^{2}g_{N}(X_{1},X_{2})grad_{v}(\frac{1}{\lambda^{2}})\}.\label{e.q:2.11}
\end{equation}
\end{proposition}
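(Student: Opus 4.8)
The plan is to exploit that $A_{X_{1}}X_{2}=v\nabla_{X_{1}}X_{2}$ is a vertical vector field, so it is completely determined by its inner products $g_{N}(A_{X_{1}}X_{2},V)$ against an arbitrary vertical vector $V\in\Gamma(\ker\psi_{*})$. Since $A$ is tensorial in both of its horizontal arguments, I may assume without loss of generality that $X_{1},X_{2}$ are basic vector fields, $\psi$-related to vector fields $\bar{X}_{1},\bar{X}_{2}$ on $B$; this is the setting in which the relatedness and bracket identities used below are available. The computation of $g_{N}(\nabla_{X_{1}}X_{2},V)$ then naturally carries both a skew-symmetric contribution, which produces the term $\tfrac{1}{2}v[X_{1},X_{2}]$, and a symmetric contribution, which produces the gradient correction; I prefer to obtain both at once through the Koszul formula rather than splitting by hand.

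First I would write out the Koszul formula for $2g_{N}(\nabla_{X_{1}}X_{2},V)$. Because $X_{1},X_{2}$ are horizontal and $V$ is vertical, the terms $X_{1}g_{N}(X_{2},V)$ and $X_{2}g_{N}(X_{1},V)$ vanish identically, as the inner products involved are zero. For the bracket terms the key observation is that for a basic field $X_{i}$ and a vertical field $V$ one has $\psi_{*}[X_{i},V]=[\bar{X}_{i},0]=0$, so that $[X_{i},V]$ is again vertical; consequently $g_{N}([X_{2},V],X_{1})$ and $g_{N}([V,X_{1}],X_{2})$ both vanish. This leaves $2g_{N}(\nabla_{X_{1}}X_{2},V)=g_{N}([X_{1},X_{2}],V)-V(g_{N}(X_{1},X_{2}))$, that is, $2g_{N}(A_{X_{1}}X_{2},V)=g_{N}(v[X_{1},X_{2}],V)-V(g_{N}(X_{1},X_{2}))$.

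The crucial remaining step is to evaluate $V(g_{N}(X_{1},X_{2}))$ through horizontal conformality. Writing the conformality relation as $g_{N}(X_{1},X_{2})=\tfrac{1}{\lambda^{2}}\,(g_{B}(\bar{X}_{1},\bar{X}_{2})\circ\psi)$ and noting that $g_{B}(\bar{X}_{1},\bar{X}_{2})\circ\psi$ is constant along the fibres and hence annihilated by the vertical field $V$, the Leibniz rule gives $V(g_{N}(X_{1},X_{2}))=\lambda^{2}g_{N}(X_{1},X_{2})\,V(\tfrac{1}{\lambda^{2}})$. Rewriting $V(\tfrac{1}{\lambda^{2}})=g_{N}(grad_{v}(\tfrac{1}{\lambda^{2}}),V)$, where only the vertical part of the gradient survives since $V$ is vertical, and substituting back yields $2g_{N}(A_{X_{1}}X_{2},V)=g_{N}(v[X_{1},X_{2}]-\lambda^{2}g_{N}(X_{1},X_{2})grad_{v}(\tfrac{1}{\lambda^{2}}),V)$ for every vertical $V$. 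Non-degeneracy of $g_{N}$ on $\ker\psi_{*}$ then gives (\ref{e.q:2.11}).

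I expect the main obstacle to be the bookkeeping that justifies discarding the two Lie-bracket terms, namely the fact that $[X_{i},V]$ is vertical for basic $X_{i}$; once that relatedness fact and the conformality identity for $V(g_{N}(X_{1},X_{2}))$ are in hand, the remainder is a substitution. A secondary point to treat with care is the reduction to basic vector fields together with the observation that $g_{B}(\bar{X}_{1},\bar{X}_{2})\circ\psi$ is genuinely fibre-constant, which is precisely what converts the conformal factor into the gradient term.
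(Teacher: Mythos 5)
The paper does not prove this proposition at all --- it is quoted verbatim from Gudmundsson's thesis \cite{G} --- so there is no in-paper argument to compare against. Your proof is correct and is essentially the standard derivation one finds in \cite{G} and in Baird--Wood: since $A_{X_{1}}X_{2}=v\nabla_{X_{1}}X_{2}$ for horizontal fields, the Koszul formula tested against a vertical $V$ kills the two derivative terms $X_{i}g_{N}(X_{j},V)$ and (for basic $X_{i}$) the two bracket terms $g_{N}([X_{i},V],X_{j})$, leaving $2g_{N}(A_{X_{1}}X_{2},V)=g_{N}(v[X_{1},X_{2}],V)-V(g_{N}(X_{1},X_{2}))$, and horizontal conformality converts $V(g_{N}(X_{1},X_{2}))=V(\lambda^{-2})\,g_{B}(\bar{X}_{1},\bar{X}_{2})\circ\psi$ into the $\operatorname{grad}_{v}(1/\lambda^{2})$ term exactly as you say. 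The only point you gloss over is that the reduction to basic fields requires both sides of \eqref{e.q:2.11} to be tensorial in $X_{1},X_{2}$ (for the right-hand side this is immediate, since $v[fX_{1},X_{2}]=fv[X_{1},X_{2}]-(X_{2}f)vX_{1}=fv[X_{1},X_{2}]$), but this is routine and does not affect the validity of the argument.
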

Let $(N,g_{N})$ and $(B,g_{B})$ be Riemannian manifolds and suppose that $%
\psi:N \longrightarrow B$ is a smooth map between them. The second fundamental form of $\psi$ is given by
\begin{equation}
(\nabla\psi_{*})(X_{1},X_{2})=\nabla^{\psi}_{X_{1}}\psi_{*}(X_{2})-\psi_{*}(\nabla^{N}_{X_{1}}X_{2}) \label{e.q:2.12}
\end{equation}
for any $X_{1},X_{2}\in\Gamma(TN)$, where $\nabla^{\psi}$ is the pullback connection. it is obvious that the second fundamental form  ($\nabla\psi_*$) is symmetric.
\begin{lemma}\cite{U}
Let $(N,g_{N})$ and $(B,g_{B})$ be Riemannian manifolds and suppose that $%
\psi:N \longrightarrow B$ is a smooth map between them. Then we have
\begin{equation}
\nabla^{\psi}_{X_{1}}\psi_{*}(X_{2})-\nabla^{\psi}_{X_{2}}\psi_{*}(X_{1})-\psi_{*}([X_{1},X_{2}])=0 \label{e.q:2.13}
\end{equation}
for $X_{1},X_{2}\in\Gamma(TN)$.
\end{lemma}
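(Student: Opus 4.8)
The plan is to derive this identity directly from the definition of the second fundamental form in (\ref{e.q:2.12}) together with the two standard facts already available: the symmetry of $(\nabla\psi_*)$, which the text asserts immediately after (\ref{e.q:2.12}), and the torsion-freeness of the Levi-Civita connection $\nabla^{N}$. No genuinely hard step is expected here; the whole content is a short algebraic rearrangement, so I would present it as a clean computation rather than a strategic argument.

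First I would write out the second fundamental form in both orders of its arguments. From (\ref{e.q:2.12}) we have
\begin{equation*}
(\nabla\psi_{*})(X_{1},X_{2})=\nabla^{\psi}_{X_{1}}\psi_{*}(X_{2})-\psi_{*}(\nabla^{N}_{X_{1}}X_{2}),
\end{equation*}
and by swapping $X_1$ and $X_2$,
\begin{equation*}
(\nabla\psi_{*})(X_{2},X_{1})=\nabla^{\psi}_{X_{2}}\psi_{*}(X_{1})-\psi_{*}(\nabla^{N}_{X_{2}}X_{1}).
\end{equation*}

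Next I would invoke the symmetry of the second fundamental form, namely $(\nabla\psi_{*})(X_{1},X_{2})=(\nabla\psi_{*})(X_{2},X_{1})$, to equate the two right-hand sides. Subtracting them and regrouping gives
\begin{equation*}
\nabla^{\psi}_{X_{1}}\psi_{*}(X_{2})-\nabla^{\psi}_{X_{2}}\psi_{*}(X_{1})=\psi_{*}(\nabla^{N}_{X_{1}}X_{2})-\psi_{*}(\nabla^{N}_{X_{2}}X_{1})=\psi_{*}\bigl(\nabla^{N}_{X_{1}}X_{2}-\nabla^{N}_{X_{2}}X_{1}\bigr),
\end{equation*}
where the last equality uses the $\mathbb{R}$-linearity of the push-forward $\psi_*$.

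Finally, because $\nabla^{N}$ is the Riemannian (hence torsion-free) connection on $N$, we have $\nabla^{N}_{X_{1}}X_{2}-\nabla^{N}_{X_{2}}X_{1}=[X_{1},X_{2}]$. Substituting this into the previous line yields $\nabla^{\psi}_{X_{1}}\psi_{*}(X_{2})-\nabla^{\psi}_{X_{2}}\psi_{*}(X_{1})=\psi_{*}([X_{1},X_{2}])$, which is exactly (\ref{e.q:2.13}) after moving all terms to one side. If anything deserves care it is the justification of the symmetry of $(\nabla\psi_*)$, which ultimately rests on the torsion-freeness of both the source and pullback connections; but since the excerpt already records that symmetry as established, I would simply cite it and let the rearrangement above complete the argument.
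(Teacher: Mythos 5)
Your argument is correct, and it is worth noting that the paper itself offers no proof of this lemma at all: it is simply quoted from Urakawa's book, with the symmetry of $(\nabla\psi_*)$ asserted as ``obvious'' a few lines earlier. Given that ordering, your derivation --- equate $(\nabla\psi_{*})(X_{1},X_{2})$ with $(\nabla\psi_{*})(X_{2},X_{1})$, cancel, and use $\nabla^{N}_{X_{1}}X_{2}-\nabla^{N}_{X_{2}}X_{1}=[X_{1},X_{2}]$ --- is a legitimate two-line deduction. The one point you rightly flag deserves a slightly sharper statement: the identity \eqref{e.q:2.13} and the symmetry of $(\nabla\psi_*)$ are \emph{equivalent} modulo the torsion-freeness of $\nabla^{N}$, and the standard route in the literature goes in the opposite direction, proving \eqref{e.q:2.13} first by a direct local computation (in coordinates, $\nabla^{\psi}_{\partial_i}\psi_*(\partial_j)=\bigl(\partial_i\partial_j\psi^{\alpha}+{}^{B}\Gamma^{\alpha}_{\beta\gamma}\,\partial_i\psi^{\beta}\partial_j\psi^{\gamma}\bigr)\partial_{\alpha}$, which is symmetric in $i,j$ because the target connection is torsion-free, so antisymmetrizing leaves exactly $\psi_*[X_1,X_2]$) and then deducing symmetry of $(\nabla\psi_*)$ as a corollary. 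Your proof is therefore only as solid as the independent justification of symmetry; since that justification is itself a one-line coordinate check using the symmetry of both sets of Christoffel symbols, there is no genuine circularity, but in a self-contained write-up you should either include that check or prove \eqref{e.q:2.13} directly and obtain symmetry from it, rather than lean on the paper's unproved ``it is obvious.''
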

From Lemma 2.1, for any $X_{1}$ a basic vector field and $V_{1}\in\Gamma(ker\psi_{*})$ 
we obtain $[X_{1},V_{1}]\in\Gamma(ker\psi_{*}).$
\begin{remark}
In this paper, we assume that all horizontal vector fields are basic vector fields.
\end{remark}
Recall that $\psi$ is called harmonic if the tension field $\tau(\psi)=trace(\nabla\psi_{*})=0$.  (for details, see \cite{BW}).
\begin{lemma} \cite{BW}
\label{lem1} Let $\psi:N \longrightarrow B$ be a horizontally conformal submersion. Then, we have
\begin{enumerate}
\item[(a)] $(\nabla \psi_*)(X_{1},X_{2})=X_{1}(\ln\lambda)\psi_* X_{2}+X_{2}(\ln\lambda)\psi_* X_{1}-g_{N}(X_{1},X_{2})\psi_*(\nabla\ln\lambda)$;
\item[(b)] $(\nabla \psi_*)(V_{1},V_{2})=-\psi_* (T_{V_{1}}V_{2})$;
\item[(c)] $(\nabla \psi_*)(X_{1},V_{1})=-\psi_*(\nabla^{N}_{X_{1}}V_{1})=-\psi_*(A_{X_{1}}V_{1})$
\end{enumerate}
for any $V_{1},V_{2}\in\Gamma(ker\psi_{*})$ and $X_{1},X_{2}\in(ker\psi_{*})^\perp$.
\end{lemma}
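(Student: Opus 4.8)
The plan is to evaluate the second fundamental form (\ref{e.q:2.12}) on each of the three pairs of arguments, exploiting that $\psi_*$ annihilates the vertical distribution together with the O'Neill decompositions (\ref{e.q:2.7})--(\ref{e.q:2.10}); only case (a) will require the horizontally conformal hypothesis. For (b), taking $V_1,V_2\in\Gamma(\ker\psi_*)$ makes $\psi_*V_2=0$, so the pullback term $\nabla^{\psi}_{V_1}\psi_*(V_2)$ vanishes and (\ref{e.q:2.12}) reduces to $(\nabla\psi_*)(V_1,V_2)=-\psi_*(\nabla^{N}_{V_1}V_2)$; substituting (\ref{e.q:2.7}) and using that $\hat\nabla_{V_1}V_2=v\nabla_{V_1}V_2$ lies in $\ker\psi_*$ leaves exactly $-\psi_*(T_{V_1}V_2)$. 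For (c), with $X_1$ horizontal and $V_1$ vertical, $\psi_*V_1=0$ kills the pullback term and (\ref{e.q:2.12}) gives at once $(\nabla\psi_*)(X_1,V_1)=-\psi_*(\nabla^{N}_{X_1}V_1)$, the first claimed equality; then (\ref{e.q:2.9}) together with $\psi_*(v\nabla_{X_1}V_1)=0$ turns this into $-\psi_*(A_{X_1}V_1)$.

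The substance is in (a). For horizontal basic fields $X_1,X_2,X_3$ I would combine the torsion-free identity (\ref{e.q:2.13}) with the fact that the pullback connection $\nabla^{\psi}$ is compatible with $g_B$, exactly as in the derivation of the Koszul formula, to obtain
\begin{align*}
2g_B(\nabla^{\psi}_{X_1}\psi_*X_2,\psi_*X_3)
&=X_1 g_B(\psi_*X_2,\psi_*X_3)+X_2 g_B(\psi_*X_1,\psi_*X_3)-X_3 g_B(\psi_*X_1,\psi_*X_2)\\
&\quad+g_B(\psi_*[X_1,X_2],\psi_*X_3)-g_B(\psi_*[X_1,X_3],\psi_*X_2)-g_B(\psi_*[X_2,X_3],\psi_*X_1).
\end{align*}
Into each inner product I then substitute the horizontally conformal relation $g_B(\psi_*Y,\psi_*Z)=\lambda^2 g_N(Y,Z)$, which is legitimate for horizontal arguments since $\psi_*$ kills vertical parts and the free slot is always horizontal.

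Expanding $X_i(\lambda^2 g_N(\cdot,\cdot))$ by the Leibniz rule, the terms in which the derivative falls on $g_N$, together with the three bracket terms, assemble via the Koszul formula for $\nabla^{N}$ into $2\lambda^2 g_N(\nabla^{N}_{X_1}X_2,X_3)=2g_B(\psi_*(\nabla^{N}_{X_1}X_2),\psi_*X_3)$. What survives is
\[
2g_B\big((\nabla\psi_*)(X_1,X_2),\psi_*X_3\big)=X_1(\lambda^2)g_N(X_2,X_3)+X_2(\lambda^2)g_N(X_1,X_3)-X_3(\lambda^2)g_N(X_1,X_2).
\]
Writing $X(\lambda^2)=2\lambda^2 X(\ln\lambda)$ (so the factor $2$ cancels) and re-expressing each factor $\lambda^2 g_N(\,\cdot\,,\,\cdot\,)$ as $g_B(\psi_*\,\cdot\,,\psi_*\,\cdot\,)$ --- in particular rewriting $X_3(\ln\lambda)=g_N(X_3,\nabla\ln\lambda)$ so that the last term becomes $g_N(X_1,X_2)\,g_B(\psi_*(\nabla\ln\lambda),\psi_*X_3)$ --- turns the whole right-hand side into $g_B(\,\cdot\,,\psi_*X_3)$ with the bracket claimed in (a). Since $\psi$ is a submersion, $\{\psi_*X_3:X_3\ \text{horizontal}\}$ spans $T_{\psi(q)}B$, so nondegeneracy of $g_B$ lets me drop $\psi_*X_3$ and read off the identity in (a).

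I expect the bookkeeping in (a) to be the only real obstacle. One must separate cleanly the terms carrying a derivative of $\lambda$ from those carrying a derivative of $g_N$, check that the latter (with the bracket terms) really reconstitute the Levi-Civita connection of $N$ so as to match $\psi_*(\nabla^{N}_{X_1}X_2)$, and handle the term $X_3(\ln\lambda)$ by passing to the gradient $\nabla\ln\lambda$ so that it can be absorbed into an inner product against $\psi_*X_3$. Throughout one should keep in mind that $[X_i,X_j]$, $\nabla^{N}_{X_1}X_2$ and $\nabla\ln\lambda$ may have vertical components; these cause no trouble because $\psi_*$ annihilates them and the remaining argument is horizontal. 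The final passage from the inner-product identity to the pointwise vector identity rests on the surjectivity of $d\psi$ with $d\psi|_{\mathcal H}$ an isomorphism onto $TB$, which is guaranteed by the definition of a horizontally conformal submersion.
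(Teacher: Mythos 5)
Your proof is correct. Note that the paper itself offers no argument for this lemma---it is simply quoted from Baird and Wood \cite{BW}---so there is nothing in the text to compare against; your derivation is essentially the standard one from that reference. Parts (b) and (c) follow exactly as you say: since $V_{2}$ (resp.\ $V_{1}$) is vertical, $\psi_{*}V_{2}$ is the zero section of $\psi^{-1}TB$, the pullback term in \eqref{e.q:2.12} drops out, and the decompositions \eqref{e.q:2.7} and \eqref{e.q:2.9} together with $\psi_{*}(v\,\cdot)=0$ give $-\psi_{*}(T_{V_{1}}V_{2})$ and $-\psi_{*}(A_{X_{1}}V_{1})$ respectively. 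For (a), your Koszul computation is sound: the pullback connection is metric for $g_{B}$ and ``torsion-free'' in the sense of \eqref{e.q:2.13}, so the six-term formula you write down is valid; substituting $g_{B}(\psi_{*}Y,\psi_{*}Z)=\lambda^{2}g_{N}(Y,Z)$, the terms differentiating $g_{N}$ plus the bracket terms reassemble into $2g_{B}(\psi_{*}(\nabla^{N}_{X_{1}}X_{2}),\psi_{*}X_{3})$ by the Koszul formula on $N$, and the leftover $X_{i}(\lambda^{2})$ terms yield exactly the right-hand side of (a) after the factor $2$ from $X(\lambda^{2})=2\lambda^{2}X(\ln\lambda)$ cancels. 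You correctly flag the two delicate points---that $[X_{i},X_{j}]$, $\nabla^{N}_{X_{1}}X_{2}$ and $\nabla\ln\lambda$ may have vertical components (harmless, since $\psi_{*}$ annihilates them and the remaining slot is horizontal), and that the final step requires $\psi_{*}|_{(\ker\psi_{*})^{\perp}}$ to be onto $T_{\psi(q)}B$ so that $\psi_{*}X_{3}$ can be stripped off by nondegeneracy of $g_{B}$.
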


Finally, we will mention the following from \cite{P}. 

Let $g_N$ be a Riemannian metric tensor on the manifold $N=N_1\times N_2$
and assume that the canonical foliations $\mathcal{D}_{N_1}$ and $\mathcal{D}_{N_2}$ intersect
perpendiculary everywhere. Then $g_N$ is the metric tensor of a usual product of Riemannian manifolds $\Longleftrightarrow$ $\mathcal{D}_{N_1}$ and $\mathcal{D}_{N_2}$ are totally geodesic foliations.

\section{Conformal slant submersions}

In this section, we introduce the notion of conformal slant submersions from almost
contact metric manifolds onto Riemannian manifolds.  We mention lots of examples, obtain the integrability
of distributions,  the geometry of foliations, some conditions related to totally geodesicness and harmonicity of the map.

\begin{definition}
\label{def} Let $\psi: (N,\varphi,\xi,\eta,g_N)\longrightarrow (B,g_{B})$ be a horizontally conformal submersion, where 
$(N,\varphi,\xi,\eta,g_N)$ be an almost contact metric manifold and $(B,g_{B})$ be a Riemannian manifold. The map 
$\psi$ is said to be slant  if for any non zero vector $V_{1}\in\Gamma(ker\psi_{\ast})-<\xi>,$ the angle $\omega(V_{1})$ between $\varphi V_{1}$ and the space $ker\psi_{\ast}$ is a constant (which is independent of yhe choice of $p\in N$ and of $V_{1}\in\Gamma(ker\psi_{\ast})-<\xi>$).  The angle $\omega$ is called the slant angle of the conformal slant submersion. Conformal holomorphic submersion and conformal anti-invariant submersions are conformal slant submersions with $\omega=0$ and $\frac{\pi}{2}$, respectively. A conformal slant submersion which is not conformal holomorphic submersions nor conformal anti-invariant is called proper conformal slant submersion.
\end{definition}

Now, we present some examples.

\begin{example}
$\mathbb{R}^{5}$ has got a cosymplectic structure as in Example \ref{example1}. Let $\psi_1:\mathbb{R}^{5}\rightarrow \mathbb{R}^{2}$ be a map defined by $\psi_1(u_{1},u_{2},v_{1},v_{2},t)=e^{7}(u_{1}\cos\alpha-v_{1}\sin\alpha,u_{2}\sin\beta-v_{2}\cos\beta).$ Then, by direct calculations we obtain the Jacobian matrix of $\psi_1$ is:
\[
\psi_{1}*=e^{7}
\begin{bmatrix}
    \cos\alpha  & 0 & -\sin\alpha & 0 & 0 \\
   0      & \sin\beta & 0 & -\cos\beta & 0 \\
  \end{bmatrix}
.
\]
Since $rank(\psi_1)=2$, the map $\psi_1$ is a submersion. A straightforward computation, we see that
$$
ker\psi_{1}*=span\{V_{1}=\sin\alpha\frac{\partial}{\partial u_{1}}+\cos\alpha\frac{\partial}{\partial v_{1}},\,\,\,V_{2}=\cos\beta\frac{\partial}{\partial u_{2}}+\sin\beta\frac{\partial}{\partial v_{2}},\,\,\,V_{3}=\xi=\frac{\partial}{\partial t}\}
$$
and
$$
(ker\psi_{1}*)^{\bot}=span\{X_{1}=\cos\alpha\frac{\partial}{\partial u_{1}}-\sin\alpha\frac{\partial}{\partial v_{1}},\,\,\,
X_{2}=\sin\beta\frac{\partial}{\partial u_{2}}-\cos\beta\frac{\partial}{\partial v_{2}}\}.
$$
Then the map $\psi_1$ is a  conformal slant submersion with the slant angle $\omega$ and dilation $\lambda=e^{7}$ such that  $\cos\omega=\mid\cos(\alpha+\beta)\mid.$
\end{example}

\begin{example}
$\mathbb{R}^{5}$ has got a cosymplectic structure as in Example \ref{example1}. Let $\psi_2:\mathbb{R}^{5}\rightarrow \mathbb{R}^{2}$ be a map defined by $\psi_2(u_{1},u_{2},v_{1},v_{2},t)=\pi^{5}(\frac{u_1-u_2}{\sqrt{2}},v_2).$ Then, by direct calculations we obtain the Jacobian matrix of $\psi_2$ is:
\[
\psi_{2}*=\pi^{5}
\begin{bmatrix}
    \frac{1}{\sqrt{2}}  & -\frac{1}{\sqrt{2}} & 0 & 0 & 0 \\
   0      & 0 & 0 & 1 & 0 \\
  \end{bmatrix}
.
\]
Since the rank of this matris is equaled to 2, the map $\psi_2$ is a submersion. After some computation, we obtain
$$
ker\psi_{2}*=span\{H_{1}=\frac{\partial}{\partial u_{1}}+\frac{\partial}{\partial u_{2}},\,\,\,H_{2}=\frac{\partial}{\partial v_{1}},\,\,\,H_{3}=\xi=\frac{\partial}{\partial t}\}
$$
and
$$
(ker\psi_{2}*)^{\bot}=span\{Z_{1}=\frac{1}{\sqrt{2}}\big(\frac{\partial}{\partial u_{1}}-\frac{\partial}{\partial u_{2}}\big),\,\,\, Z_{2}=\frac{\partial}{\partial v_{2}}\}.
$$
Furthermore, $\psi_2(H_1)=-\frac{\partial}{\partial v_1}-\frac{\partial}{\partial v_2}$ and $\psi_2(H_{2})=\frac{\partial}{\partial u_{2}}$
imply that $\mid g(\psi_2(H_1),H_2)\mid=\frac{1}{\sqrt{2}}.$ So, the map $\psi_2$ is a  conformal slant submersion with the slant angle $\omega=\frac{\pi}{4}$ and dilation 
$\lambda=\pi^{5}.$ 
\end{example}

\begin{example}
$\mathbb{R}^{7}$ has got a cosymplectic structure as in Example \ref{example1}. Let $\psi_3:\mathbb{R}^{7}\rightarrow \mathbb{R}^{4}$ be a map defined by 
$\psi_3(u_{1},u_{2},u_3,v_{1},v_{2},v_3,t)=e^{11}(u_{1}, \frac{v_1-v_2}{\sqrt{2}},v_3,u_2).$ Then, by direct calculations we obtain the Jacobian matrix of $\psi_3$ is:
\[
\psi_{3}*=e^{11}
\begin{bmatrix}
    1  & 0 & 0 & 0 & 0 & 0 & 0 \\
   0   & 0 & 0 & \frac{1}{\sqrt{2}} &  -\frac{1}{\sqrt{2}} & 0 & 0\\
   0  & 0 & 0 & 0 & 0 & 1& 0 \\
    0   & 1 & 0  & 0 & 0 & 0 & 0\\
  \end{bmatrix}
.
\]
We easily see that the map $\psi_3$ is a submersion. After some computations, we derive
$$
ker\psi_{3}*=span\{\bar{H}_{1}=\frac{\partial}{\partial u_{3}},\,\,\,\bar{H}_{2}=\frac{1}{\sqrt{2}}(\frac{\partial}{\partial v_{1}}+\frac{\partial}{\partial v_{2}})
,\,\,\,\bar{H}_{3}=\xi=\frac{\partial}{\partial t}\}
$$
and
\begin{align*}
(ker\psi_{3}*)^{\bot}=span\{&\bar{Z}_{1}=\frac{\partial}{\partial u_{1}},\,\,\, \bar{Z}_{2}=\frac{1}{\sqrt{2}}(\frac{\partial}{\partial v_{1}}-\frac{\partial}{\partial v_{2}}),\,\,\,
\bar{Z}_{3}=\frac{\partial}{\partial u_{2}}, \bar{Z}_{4}=\frac{\partial}{\partial v_{3}}\}.
\end{align*}
Moreover, $\psi_3(\bar{H}_1)=-\frac{\partial}{\partial v_1}$ and $\psi_3(\bar{H}_{2})=\frac{1}{\sqrt{2}}(\frac{\partial}{\partial u_{2}}+\frac{\partial}{\partial u_{3}})$
imply that $\mid g(\psi_3\bar{H}_1,\bar{H}_2)\mid=\frac{1}{\sqrt{2}}.$ So, the map $\psi_3$ is a  conformal slant submersion with the slant angle $\omega=\frac{\pi}{4}$ and dilation 
$\lambda=e^{11}.$ 
\end{example}

\begin{example}
Let $(N,\varphi,\xi,\eta,g_{N})$ be an almost contact metric manifold. Suppose that $\sigma:TN\rightarrow N$ is the natural projection. Then, the map $\sigma$ is a conformal slant submersion with the slant angle $\omega=0$ and dilation $\lambda=1.$
\end{example}

\begin{example}
Let $(N,\varphi,\xi,\eta,g_{N})$ be an almost contact metric manifold and $(B,g_{B})$ a Riemannian manifold.  Suppose that $\psi_3:N\rightarrow B$ is a slant submersion \cite{erken}. Then, the map $\psi_3$ is a conformal slant submersion with  dilation $\lambda=1.$
\end{example}

\begin{example}
Let $(N^{2n+1},\varphi,\xi,\eta,g_{N})$ be an almost contact metric manifold and $(B^{2n},g_{B})$ a Riemannian manifold.  Suppose that $\psi_4:N\rightarrow B$ is a horizontally conformal  submersion with dilation $\lambda.$  Then, the map $\psi_4$ is a conformal slant submersion with the slant angle $\omega=\frac{\pi}{2}$  and  dilation $\lambda=1$ \cite{A}.
\end{example}

Let  $\psi:(N,\varphi,\xi,\eta,g_N)\longrightarrow (B,g_{B})$ be a conformal slant submersion from an almost contact metric  manifold
$(N,\varphi,\xi,\eta,g_N)$ to a Riemannian manifold $(B,g_{B})$. Then for any $V_{1}\in\Gamma(ker\psi_{\ast}),$ we write
\begin{equation}
\varphi V_{1}=DV_{1}+EV_{1}, \label{e.q:3.1}
\end{equation}
where $DV_{1}$ and $EV_{1}$  are vertical and horizontal components of $\varphi V_{1},$ respectively.\\
Given $X_{1}\in\Gamma(ker\psi_{\ast})^{\bot},$ we obtain
\begin{equation}
\varphi X_{1}=dX_{1}+eX_{1}, \label{e.q:3.2}
\end{equation}
where $dX_{1}\in\Gamma(ker\psi_{\ast})$ and $eX_{1}\in\Gamma(ker\psi_{\ast})^{\bot}.$\\
 We denote  the complementary  orthogonal distribution to decomposition $E(ker\psi_{*})$ in $(ker\psi_{\ast})^{\bot}$ by $\mu.$ Then we get
\begin{equation}
(ker\psi_{*})^\perp=E(ker\psi_{*})\oplus\mu. \label{e.q:3.3}
\end{equation}
From (\ref{e.q:2.4}), (\ref{e.q:2.7}) and (\ref{e.q:2.9}) we have
\begin{equation}
T_{V_{1}}\xi=0,\,\,\,\,A_{X_{1}}\xi=0 \label{e.q:3.4}
\end{equation}
for $X_{1}\in\Gamma((ker\psi_{*})^\perp)$ and $V_{1}\in\Gamma(ker\psi_{*}).$\\
By using (\ref{e.q:2.2}), (\ref{e.q:3.1}) and (\ref{e.q:3.2}) we get following result.
\begin{lemma}
Let $\psi$ be a conformal slant submersion from a cosymplectic manifold $(N,\varphi,\xi,\eta,g_{N})$ onto a Riemannian manifold $(B,g_{B})$. Then we obtain
\begin{equation}
DdX_{1}+deX_{1}=0, \label{e.q:3.5}
\end{equation}
\begin{equation}
EdX_{1}+e^{2}X_{1}=-X_{1},  \label{e.q:3.6}
\end{equation}
\begin{equation}
D^{2}V_{1}+dEV_{1}=\varphi^{2}V_{1}, \label{e.q:3.7}
\end{equation}
\begin{equation}
EDV_{1}+eEV_{1}=0 \label{e.q:3.8}
\end{equation}
for $X_{1}\in\Gamma((ker\psi_{*})^\perp)$ and $V_{1}\in\Gamma(ker\psi_{*})$.
\end{lemma}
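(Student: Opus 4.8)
The plan is to obtain all four identities by the same mechanism: apply $\varphi$ a second time to the defining decompositions (\ref{e.q:3.1}) and (\ref{e.q:3.2}), re-decompose the results with (\ref{e.q:3.1})/(\ref{e.q:3.2}), invoke the almost contact identity $\varphi^{2}=-I+\eta\otimes\xi$ from (\ref{e.q:2.1}), and then read off vertical and horizontal components separately. Neither conformality nor the cosymplectic condition (\ref{e.q:2.3}) is needed; these are purely algebraic consequences of the almost contact metric structure together with the splitting $TN=\ker\psi_{*}\oplus(\ker\psi_{*})^{\perp}$.

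For (\ref{e.q:3.5}) and (\ref{e.q:3.6}) I would start from $\varphi X_{1}=dX_{1}+eX_{1}$ for $X_{1}\in\Gamma((\ker\psi_{*})^{\perp})$ and apply $\varphi$. Since $dX_{1}\in\Gamma(\ker\psi_{*})$, decompose $\varphi(dX_{1})=DdX_{1}+EdX_{1}$ by (\ref{e.q:3.1}); since $eX_{1}\in\Gamma((\ker\psi_{*})^{\perp})$, decompose $\varphi(eX_{1})=deX_{1}+e^{2}X_{1}$ by (\ref{e.q:3.2}). This gives $\varphi^{2}X_{1}=(DdX_{1}+deX_{1})+(EdX_{1}+e^{2}X_{1})$, the first bracket vertical and the second horizontal. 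The decisive observation is that because $\xi$ is vertical and $X_{1}$ is horizontal, $\eta(X_{1})=g_{N}(X_{1},\xi)=0$ by (\ref{e.q:2.2}), so $\varphi^{2}X_{1}=-X_{1}$ is purely horizontal. Comparing vertical parts yields (\ref{e.q:3.5}) and comparing horizontal parts yields (\ref{e.q:3.6}).

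For (\ref{e.q:3.7}) and (\ref{e.q:3.8}) I would repeat the argument with $V_{1}\in\Gamma(\ker\psi_{*})$, applying $\varphi$ to $\varphi V_{1}=DV_{1}+EV_{1}$. Decomposing $\varphi(DV_{1})=D^{2}V_{1}+EDV_{1}$ and $\varphi(EV_{1})=dEV_{1}+eEV_{1}$ as above produces $\varphi^{2}V_{1}=(D^{2}V_{1}+dEV_{1})+(EDV_{1}+eEV_{1})$. Here the term $\eta(V_{1})\xi$ in $\varphi^{2}V_{1}=-V_{1}+\eta(V_{1})\xi$ need not vanish, but it is vertical (as $\xi$ is vertical), so $\varphi^{2}V_{1}$ still has no horizontal component. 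Equating vertical parts gives (\ref{e.q:3.7}) and equating horizontal parts gives (\ref{e.q:3.8}).

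The computation is elementary, and the only point requiring care — the step I would watch most closely — is the asymmetric treatment of the Reeb term: it forces $\varphi^{2}X_{1}=-X_{1}$ on horizontal vectors but leaves the full $\varphi^{2}V_{1}=-V_{1}+\eta(V_{1})\xi$ on vertical ones. This is exactly why the right-hand side of (\ref{e.q:3.6}) simplifies to $-X_{1}$ while (\ref{e.q:3.7}) must retain $\varphi^{2}V_{1}$ rather than $-V_{1}$, and it is the feature I would emphasize to justify the differing forms of the two vertical identities.
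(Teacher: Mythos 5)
Your argument is correct and is exactly the computation the paper intends: it states the lemma follows from (\ref{e.q:2.2}), (\ref{e.q:3.1}) and (\ref{e.q:3.2}) and omits the details, which are precisely your step of applying $\varphi$ to the two decompositions, invoking $\varphi^{2}=-I+\eta\otimes\xi$, and separating vertical and horizontal parts. Your remark that $\eta(X_{1})=0$ for horizontal $X_{1}$ (since $\xi$ is vertical) while the Reeb term survives, but stays vertical, for $V_{1}\in\Gamma(\ker\psi_{*})$ is the right justification for the asymmetry between (\ref{e.q:3.6}) and (\ref{e.q:3.7}).
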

Using (\ref{e.q:2.7}), (\ref{e.q:2.8}), (\ref{e.q:3.1}) and (\ref{e.q:3.2}) we obtain
\begin{equation}
 (\nabla_{V_{1}}D)V_{2}=dT_{V_{1}}V_{2}-T_{V_{1}}EV_{2}, \label{e.q:3.9}
 \end{equation}
 \begin{equation}
(\nabla_{V_{1}}E)V_{2} =eT_{V_{1}}V_{2}-T_{V_{1}}DV_{2} \label{e.q:3.10}
\end{equation}
here
\begin{equation}
(\nabla_{V_{1}}D)V_{2}=\hat{\nabla}_{V_{1}}DV_{2}-D\hat{\nabla}_{V_{1}}V_{2},
\label{eq:3.11}
\end{equation}
\begin{equation}
(\nabla_{V_{1}}E)V_{2}=h\nabla_{V_{1}}EV_{2}-E\hat{\nabla}_{V_{1}}V_{2},
\label{eq:3.12}
\end{equation}
for $V_{1},V_{2}\in\Gamma((ker\psi_{*}).$
We call $D$ and $E$ parallel if $\nabla D=0$ and $\nabla E=0,$ respectively.\\
Since the proof of the followig theorem is quite similar to Theorem 2.2 of \cite{cabrerizo}, thus we don't give the proof of it.
\begin{theorem}\label{teo1}
Let $\psi$ be a conformal slant submersion from a cosymplectic manifold $(N,\varphi,\xi,\eta,g_{N})$ onto a Riemannian manifold $(B,g_{B})$. Then, we obtain
\begin{equation}
D^{2}=-\cos^{2}\omega(I-\eta\otimes\xi) \label{e.q:3.13}
\end{equation}
for any  $V_{1}\in\Gamma(ker\psi_{*})$.
\end{theorem}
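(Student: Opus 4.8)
The plan is to follow the classical slant characterization of Cabrerizo et al.: first express $\cos\omega$ through the tangential part $D$ of $\varphi$, obtain a scalar (quadratic-form) identity, and then upgrade it to the operator identity \eqref{e.q:3.13} by a symmetry/polarization argument. Throughout I fix a nonzero $V_{1}\in\Gamma(\ker\psi_{*})-\langle\xi\rangle$. By the definition of the slant angle, $\omega$ is the angle between $\varphi V_{1}$ and the subspace $\ker\psi_{*}$; since by \eqref{e.q:3.1} the orthogonal projection of $\varphi V_{1}$ onto $\ker\psi_{*}$ is precisely $DV_{1}$, and $\varphi V_{1}\neq 0$ because $V_{1}\notin\langle\xi\rangle$, we get
$$\cos\omega=\frac{|DV_{1}|}{|\varphi V_{1}|}.$$

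The preliminary fact I would record is that $\varphi$ is skew-symmetric with respect to $g_{N}$: from $\varphi^{2}=-I+\eta\otimes\xi$, $\eta\circ\varphi=0$ and \eqref{e.q:2.2} a short computation yields $g_{N}(\varphi X_{1},X_{2})=-g_{N}(X_{1},\varphi X_{2})$. Consequently $D$ is skew-symmetric on $\ker\psi_{*}$: for $V,W\in\Gamma(\ker\psi_{*})$ we have $g_{N}(DV,W)=g_{N}(\varphi V,W)=-g_{N}(V,\varphi W)=-g_{N}(V,DW)$, since the horizontal parts $EV,EW$ are orthogonal to the vertical $V,W$. Hence $D^{2}$ is a symmetric (indeed negative semidefinite) endomorphism of $\ker\psi_{*}$, and $|DV_{1}|^{2}=-g_{N}(D^{2}V_{1},V_{1})$.

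Next I would compute the two norms appearing above. Using \eqref{e.q:2.2}, $|\varphi V_{1}|^{2}=g_{N}(\varphi V_{1},\varphi V_{1})=|V_{1}|^{2}-\eta(V_{1})^{2}=g_{N}\big((I-\eta\otimes\xi)V_{1},V_{1}\big)$. Substituting both expressions into the cosine formula and clearing denominators gives, for every vertical $V_{1}$,
$$g_{N}\big(-D^{2}V_{1}-\cos^{2}\omega\,(I-\eta\otimes\xi)V_{1},\ V_{1}\big)=0.$$
This is exactly where the slant hypothesis enters decisively: because $\omega$ is \emph{constant}, $\cos^{2}\omega$ is a genuine scalar, so $A:=-D^{2}-\cos^{2}\omega(I-\eta\otimes\xi)$ is a single well-defined endomorphism of $\ker\psi_{*}$ whose associated quadratic form vanishes identically.

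Finally, both $-D^{2}$ and $I-\eta\otimes\xi$ are symmetric, so $A$ is symmetric; a symmetric operator whose quadratic form is identically zero must be the zero operator (by polarization). Therefore $A=0$, that is $D^{2}=-\cos^{2}\omega(I-\eta\otimes\xi)$, which is \eqref{e.q:3.13}; the remaining direction $V_{1}=\xi$ is immediate, since $\varphi\xi=0$ forces $D\xi=0$ and both sides vanish. The main obstacle—really the only non-routine point—is this last passage from the pointwise scalar identity to the operator identity: one must observe that $D^{2}$ is symmetric and use the constancy of $\omega$ so that $\cos^{2}\omega$ can be treated as a fixed constant when polarizing. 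Everything else is a direct unwinding of \eqref{e.q:3.1} and \eqref{e.q:2.2}.
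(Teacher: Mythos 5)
Your proof is correct, and it is essentially the argument the paper points to: the authors omit the proof of Theorem \ref{teo1}, referring instead to Theorem 2.2 of \cite{cabrerizo}, whose proof is exactly your computation of $\cos\omega=|DV_{1}|/|\varphi V_{1}|$ combined with the skew-symmetry of $D$, the constancy of $\omega$, and polarization of the resulting symmetric operator. Your explicit treatment of the excluded direction $V_{1}=\xi$ is a welcome extra care, not a deviation.
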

By (\ref{e.q:2.2}), (\ref{e.q:3.1}) and (\ref{e.q:3.13}) we have following result.
\begin{corollary}
Let $\psi$ be a conformal slant submersion from a cosymplectic manifold $(N,\varphi,\xi,\eta,g_{N})$ onto a Riemannian manifold $(B,g_{B})$. Then, we obtain
\begin{equation}
g_{N}(DV_{1},DV_{2})=\cos^{2}\omega(g_{N}(V_{1},V_{2})-\eta(V_{1})\eta(V_{2})),\label{e.q:3.14}
\end{equation}
\begin{equation}
 g_{N}(EV_{1},EV_{2})=\sin^{2}\omega(g_{N}(V_{1},V_{2})-\eta(V_{1})\eta(V_{2})),\label{e.q:3.15}
\end{equation}
for $V_{1},V_{2}\in\Gamma((ker\psi_{*}).$
\end{corollary}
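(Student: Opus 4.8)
The plan is to derive both identities directly from the structure equation (\ref{e.q:3.13}) of Theorem \ref{teo1}, using only the compatibility relation (\ref{e.q:2.2}) and the orthogonal splitting (\ref{e.q:3.1}). First I would record the skew-symmetry of $\varphi$ with respect to $g_{N}$: replacing $X_{2}$ by $\varphi X_{2}$ in (\ref{e.q:2.2}) and invoking (\ref{e.q:2.1}) (namely $\varphi^{2}=-I+\eta\otimes\xi$ together with $\eta\circ\varphi=0$) gives $g_{N}(\varphi X_{1},X_{2})=-g_{N}(X_{1},\varphi X_{2})$ for all $X_{1},X_{2}\in\Gamma(TN)$. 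Restricting this to $V_{1},V_{2}\in\Gamma(\ker\psi_{*})$ and using that $DV_{i}$ is vertical while $EV_{i}$ is horizontal, so the horizontal parts drop out of inner products against vertical vectors, I obtain $g_{N}(DV_{1},V_{2})=g_{N}(\varphi V_{1},V_{2})=-g_{N}(V_{1},\varphi V_{2})=-g_{N}(V_{1},DV_{2})$; that is, $D$ is skew-symmetric on the vertical distribution.

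For the first identity (\ref{e.q:3.14}), I would apply this skew-symmetry twice to pass to $D^{2}$: since $DV_{1}$ is again vertical,
\begin{equation*}
g_{N}(DV_{1},DV_{2})=-g_{N}(V_{1},D^{2}V_{2}).
\end{equation*}
Substituting $D^{2}V_{2}=-\cos^{2}\omega\,(V_{2}-\eta(V_{2})\xi)$ from (\ref{e.q:3.13}) and using $\eta(V_{1})=g_{N}(V_{1},\xi)$ from (\ref{e.q:2.2}) yields exactly $g_{N}(DV_{1},DV_{2})=\cos^{2}\omega\,(g_{N}(V_{1},V_{2})-\eta(V_{1})\eta(V_{2}))$.

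For the second identity (\ref{e.q:3.15}), the idea is a Pythagorean decomposition. Since $DV_{i}$ and $EV_{i}$ are the orthogonal vertical and horizontal components of $\varphi V_{i}$ in (\ref{e.q:3.1}), the cross terms vanish and
\begin{equation*}
g_{N}(\varphi V_{1},\varphi V_{2})=g_{N}(DV_{1},DV_{2})+g_{N}(EV_{1},EV_{2}).
\end{equation*}
On the other hand, (\ref{e.q:2.2}) gives $g_{N}(\varphi V_{1},\varphi V_{2})=g_{N}(V_{1},V_{2})-\eta(V_{1})\eta(V_{2})$. Subtracting the first identity and using $1-\cos^{2}\omega=\sin^{2}\omega$ produces (\ref{e.q:3.15}).

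No genuine obstacle arises here; the computation is short once (\ref{e.q:3.13}) is in hand. The only point requiring a little care is the bookkeeping with the $\eta\otimes\xi$ correction terms — in particular remembering that $\xi$ is vertical, so the term $\eta(V_{2})\xi$ appearing in $D^{2}V_{2}$ contributes $\eta(V_{1})\eta(V_{2})$ through $g_{N}(V_{1},\xi)=\eta(V_{1})$ — together with the verification that the horizontal components $EV_{i}$ are indeed annihilated in the inner products used to establish the skew-symmetry of $D$.
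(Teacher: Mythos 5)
Your proof is correct and follows exactly the route the paper intends: the paper offers no written argument, merely asserting that the corollary follows from (\ref{e.q:2.2}), (\ref{e.q:3.1}) and (\ref{e.q:3.13}), and your derivation (skew-symmetry of $D$ on $\ker\psi_{*}$, then $g_{N}(DV_{1},DV_{2})=-g_{N}(V_{1},D^{2}V_{2})$, then the orthogonal Pythagorean split of $\varphi V_{i}$ into $DV_{i}+EV_{i}$) is precisely the computation those three citations encode. Nothing further is needed.
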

\begin{proposition}
Let $\psi:(N,g_{N},\varphi,\eta,\xi)\rightarrow (B,g_{B})$ be a conformal  slant
submersion. If $N$ is a cosymplectic manifold and
$E$ is parallel with respect to $\nabla$ on $(ker\psi_{\ast}),$
then we have
\begin{equation}
T_{DV_{1}}DV_{1}=-\cos^{2}\omega T_{V_{1}}V_{1}\label{e.q:3.16}
\end{equation}
for any  $V_{1}\in\Gamma(ker\psi_{\ast}).$
\end{proposition}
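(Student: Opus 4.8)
The plan is to turn the hypothesis that $E$ is parallel into a single algebraic identity and then feed it repeatedly into the structural formula $D^{2}=-\cos^{2}\omega(I-\eta\otimes\xi)$ of Theorem \ref{teo1}. First I would extract the consequence of parallelism: since (\ref{e.q:3.10}) reads $(\nabla_{V_{1}}E)V_{2}=eT_{V_{1}}V_{2}-T_{V_{1}}DV_{2}$, the assumption $\nabla E=0$ on $(\ker\psi_{*})$ gives
\[
T_{V_{1}}DV_{2}=eT_{V_{1}}V_{2}
\]
for all $V_{1},V_{2}\in\Gamma(\ker\psi_{*})$. This relation, which exchanges the interior operator $D$ (acting in the second argument of $T$) for the exterior operator $e$, is the engine of the proof; everything else is bookkeeping of which slot each operator occupies.

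Next I would evaluate $T_{DV_{1}}DV_{1}$. Applying the displayed identity with $V_{1}$ replaced by the (vertical) field $DV_{1}$ and $V_{2}=V_{1}$ gives $T_{DV_{1}}DV_{1}=eT_{DV_{1}}V_{1}$. Using that $T$ is the second fundamental form of the fibres and hence symmetric on vertical fields, I rewrite $T_{DV_{1}}V_{1}=T_{V_{1}}DV_{1}$, and the identity once more (with $V_{2}=V_{1}$) gives $T_{V_{1}}DV_{1}=eT_{V_{1}}V_{1}$. Hence $T_{DV_{1}}DV_{1}=e^{2}T_{V_{1}}V_{1}$.

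It then remains to show $e^{2}T_{V_{1}}V_{1}=-\cos^{2}\omega\,T_{V_{1}}V_{1}$. Here I apply the identity two more times, now pushing the operators inward: $e^{2}T_{V_{1}}V_{1}=e(eT_{V_{1}}V_{1})=e\,T_{V_{1}}DV_{1}=T_{V_{1}}D^{2}V_{1}$. By Theorem \ref{teo1}, $D^{2}V_{1}=-\cos^{2}\omega(V_{1}-\eta(V_{1})\xi)$, and since $T_{V_{1}}\xi=0$ by (\ref{e.q:3.4}) while $\cos^{2}\omega$ is constant, the Reeb term drops out and I obtain $T_{V_{1}}D^{2}V_{1}=-\cos^{2}\omega\,T_{V_{1}}V_{1}$. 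Combining this with the previous paragraph yields exactly $T_{DV_{1}}DV_{1}=-\cos^{2}\omega\,T_{V_{1}}V_{1}$.

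The calculation is short, and the only delicate point—the step I expect to be the main obstacle—is the asymmetry of the relation $T_{V_{1}}DV_{2}=eT_{V_{1}}V_{2}$ in its two arguments: it can be invoked only when $D$ sits in the \emph{second} slot of $T$. The symmetry $T_{U}W=T_{W}U$ for vertical $U,W$ is precisely what allows me to move $D$ out of the first slot (where it is inert) into the second (where the relation applies), and the cosymplectic identity $T_{V_{1}}\xi=0$ is what prevents the $\eta\otimes\xi$ part of $D^{2}$ from contaminating the final expression.
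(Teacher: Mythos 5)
Your proof is correct and follows essentially the same route as the paper: both arguments turn the parallelism of $E$ via (\ref{e.q:3.10}) into the relation $T_{V_{1}}DV_{2}=eT_{V_{1}}V_{2}$, use the symmetry of $T$ on vertical fields to deduce $T_{DV_{1}}DV_{1}=T_{V_{1}}D^{2}V_{1}$, and then conclude with Theorem \ref{teo1} together with $T_{V_{1}}\xi=0$ from (\ref{e.q:3.4}). Your detour through $e^{2}T_{V_{1}}V_{1}$ is harmless but unnecessary, since $eT_{DV_{1}}V_{1}=eT_{V_{1}}DV_{1}=T_{V_{1}}D^{2}V_{1}$ already closes the argument.
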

\begin{proof}
 If $E$ is parallel, then
we derive $eT_{V_{1}}V_{2}=T_{V_{1}}DV_{2}$ for any $V_{1},V_{2}\in\Gamma(ker\psi_{\ast}).$
Interchanging the role of $V_{1}$ and $V_{2},$ we get $eT_{V_{2}}V_{1}=T_{V_{2}}DV_{1}.$
So, we have
$$
eT_{V_{1}}V_{2}-eT_{V_{2}}V_{1}=T_{V_{1}}DV_{2}-T_{V_{2}}DV_{1}
$$
Since $T$ is symmetric, we get $T_{V_{1}}DV_{2}=T_{V_{2}}DX_{1}.$ Then
substituting $V_{2}$ by $DV_{1}$ we get $T_{V_{1}}D^{2}V_{1}=T_{DV_{1}}DV_{1}.$ By (\ref{e.q:3.4}) and (\ref{e.q:3.13}) we obtain (\ref{e.q:3.16}).\\
\end{proof}
\begin{theorem}
Let $\psi$ be a conformal slant submersion from a cosymplectic manifold $(N^{2n+1},\varphi,\xi,\eta,g_{N})$ onto a Riemannian manifold $(B^{s},g_{B})$. Suppose that $E$ is parallel with slant angle $\omega\in[0,\frac{\pi}{2}).$ Then, all the fibres of the map $\psi$ are minimal.
\end{theorem}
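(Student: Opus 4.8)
The plan is to show that the mean curvature vector of each fibre vanishes, i.e.\ that the trace of the second fundamental form $T$ of the fibres is zero. Since $T$ restricted to vertical vectors is exactly the second fundamental form of the fibres, it suffices to exhibit an orthonormal frame $\{e_a\}$ of $\ker\psi_*$ for which $\sum_a T_{e_a}e_a=0$.

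First I would build an adapted orthonormal frame of $\ker\psi_*$ out of the slant structure. Since $\omega\in[0,\frac{\pi}{2})$ we have $\cos\omega\neq0$, and by (\ref{e.q:3.13}) the operator $D$ satisfies $D^{2}V_{1}=-\cos^{2}\omega\,(V_{1}-\eta(V_{1})\xi)$; moreover (\ref{e.q:3.1}) shows that $D$ maps $\ker\psi_*$ into itself and that $D\xi=0$, because $\varphi\xi=0$. Hence $D$ preserves the orthogonal complement $\mathcal{D}$ of $\langle\xi\rangle$ inside $\ker\psi_*$, on which $\sec\omega\,D$ acts as an almost complex structure. Starting from any unit $e_{1}\in\mathcal{D}$ and using the skew-symmetry of $\varphi$ (which follows from (\ref{e.q:2.2}), so that $g_{N}(DV_{1},V_{2})=-g_{N}(V_{1},DV_{2})$ on vertical vectors) together with the norm relation (\ref{e.q:3.14}), one checks that $e_{1}$ and $\sec\omega\,De_{1}$ are orthonormal and orthogonal to $\xi$; iterating produces an orthonormal frame
$$\{\xi,\;e_{1},\;\sec\omega\,De_{1},\;\dots,\;e_{m},\;\sec\omega\,De_{m}\}$$
of $\ker\psi_*$, where $\dim\ker\psi_*=2m+1$.

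With respect to this frame the trace of $T$ splits as $T_{\xi}\xi+\sum_{i=1}^{m}\bigl(T_{e_{i}}e_{i}+\sec^{2}\omega\,T_{De_{i}}De_{i}\bigr)$. By (\ref{e.q:3.4}) the first term vanishes. For each remaining pair I would invoke the preceding Proposition, which under the hypothesis that $E$ is parallel yields (\ref{e.q:3.16}), namely $T_{De_{i}}De_{i}=-\cos^{2}\omega\,T_{e_{i}}e_{i}$. Substituting this gives $T_{e_{i}}e_{i}+\sec^{2}\omega\,T_{De_{i}}De_{i}=T_{e_{i}}e_{i}-T_{e_{i}}e_{i}=0$, so every summand cancels and the trace of $T$ is zero; thus all fibres of $\psi$ are minimal.

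The only genuine work is the frame construction in the second step, namely verifying the off-diagonal orthogonalities $g_{N}(e_{i},De_{j})=0$ and $g_{N}(De_{i},De_{j})=\cos^{2}\omega\,\delta_{ij}$, but this is routine once the skew-symmetry of $D$ and the relation (\ref{e.q:3.14}) are in hand. The essential structural input is the identity (\ref{e.q:3.16}) from the previous Proposition, which is exactly where the parallelism of $E$ enters; everything else is bookkeeping on the adapted frame.
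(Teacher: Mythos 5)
Your proposal is correct and follows essentially the same route as the paper: both use the adapted slant frame $\{\xi, e_i, \sec\omega\, De_i\}$ of $\ker\psi_*$, the vanishing $T_{\xi}\xi=0$ from (\ref{e.q:3.4}), and the identity (\ref{e.q:3.16}) to cancel the terms pairwise. The only cosmetic difference is that you compute the trace of $T$ directly (which is exactly minimality of the fibres), whereas the paper phrases the same cancellation through the tension field, citing Lemma 5 of the Erken--Murathan reference for the frame you construct explicitly.
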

\begin{proof}
Using (\ref{e.q:3.4}) and Lemma 5 of \cite{erken}, we have
$$
\tau(\psi)=\sum_{i=1}^{n-\frac{s}{2}}(\nabla\psi_{\ast})(E_{i},E_{i})=-\sum_{i=1}^{n-\frac{s}{2}}\psi_{\ast}(T_{E_{i}}E_{i}+T_{\sec\omega DE_{i}}\sec\omega DE_{i})-\psi_{\ast}(T_{\xi}\xi).
$$
Since $T_{\xi}\xi=0,$ we get
$$
\tau=-\sum_{i=1}^{n-\frac{s}{2}}\psi_{\ast}(T_{E_{i}}E_{i}+\sec^{2}\omega T_{DE_{i}}DE_{i}).
$$
By (\ref{e.q:3.16}), we have
$$
\tau=-\sum_{i=1}^{n-\frac{s}{2}}\psi_{\ast}(T_{E_{i}}E_{i}+\sec^{2}\omega(-\cos^{2}\omega T_{E_{i}}E_{i})=-\sum_{i=1}^{n-\frac{s}{2}}\psi_{\ast}(T_{E_{i}}E_{i}-T_{E_{i}}E_{i})=0.
$$
Thus, we prove that $\psi$ is harmonic.
\end{proof}

We now deal with the integrability of the distributions and the geometry of foliations.

\begin{theorem}
Let $\psi$ be a conformal slant submersion from a cosymplectic manifold $(N,\varphi,\xi,\eta,g_{N})$ onto a Riemannian manifold $(B,g_{B})$. Then the following conditions are equivalent to each other;
\begin{enumerate}
\item[(i)] The distribution $(ker\psi_{*})^\perp$ is integrable,
\item[(ii)]$\begin{aligned}[t]
&{\lambda^{-2}}g_{B}(\nabla_{X_{2}}^{\psi}\psi_{*}eX_{1}-\nabla^{\psi}_{X_{1}}\psi_{*}eX_{2},\psi_{*}EV_{1})\\
&=g_{N}(v\nabla_{X_{1}}dX_{2}+A_{X_{1}}eX_{2}-v\nabla_{X_{2}}dX_{1}-A_{X_{2}}eX_{1},DV_{1})\\
&+g_{N}(A_{X_{1}}dX_{2}-A_{X_{2}}dX_{1}-X_{_{1}}(\ln\lambda)eX_{2}+X_{2}(\ln\lambda)eX_{1}\\
&-eX_{2}(\ln\lambda)X_{1}+eX_{1}(\ln\lambda)X_{2}\\
&+2g_N(X_{1},eX_{2})(\nabla\ln\lambda),EV_{1})
\end{aligned}$
\end{enumerate}
for any $X_{1},X_{2}\in\Gamma((ker\psi_{*})^\perp)$ and $V_{1}\in\Gamma(ker\psi_{*})$.
\end{theorem}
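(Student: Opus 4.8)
The plan is to test integrability directly through the criterion that $(\ker\psi_*)^\perp$ is integrable if and only if $g_N([X_1,X_2],V_1)=0$ for all horizontal $X_1,X_2$ and all vertical $V_1$. Writing $[X_1,X_2]=\nabla_{X_1}X_2-\nabla_{X_2}X_1$, it suffices to analyze $g_N(\nabla_{X_1}X_2,V_1)$ and then antisymmetrize in $X_1,X_2$. First I would observe that the Reeb direction causes no trouble: since $\nabla\xi=0$ by (\ref{e.q:2.4}) and $g_N(X_2,\xi)=0$, we have $\eta(\nabla_{X_1}X_2)=g_N(\nabla_{X_1}X_2,\xi)=0$, so $g_N([X_1,X_2],\xi)=0$ automatically and condition (ii) is vacuous when $V_1=\xi$. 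Hence only $V_1$ transverse to $\langle\xi\rangle$ matters.

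Next, using the metric relation (\ref{e.q:2.2}) together with $\eta(\nabla_{X_1}X_2)=0$, I would rewrite $g_N(\nabla_{X_1}X_2,V_1)=g_N(\varphi\nabla_{X_1}X_2,\varphi V_1)$, and then invoke the cosymplectic identity (\ref{e.q:2.3}) to move $\varphi$ through the connection: $\varphi\nabla_{X_1}X_2=\nabla_{X_1}\varphi X_2$. Substituting the decompositions $\varphi X_2=dX_2+eX_2$ from (\ref{e.q:3.2}) and $\varphi V_1=DV_1+EV_1$ from (\ref{e.q:3.1}) reduces the problem to computing $g_N(\nabla_{X_1}(dX_2+eX_2),DV_1+EV_1)$. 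Here I would split each covariant derivative into its vertical and horizontal parts using O'Neill's formulas (\ref{e.q:2.9}) and (\ref{e.q:2.10}): the vertical piece of $\nabla_{X_1}dX_2$ is $v\nabla_{X_1}dX_2$ with horizontal piece $A_{X_1}dX_2$, while the vertical piece of $\nabla_{X_1}eX_2$ is $A_{X_1}eX_2$ with horizontal piece $h\nabla_{X_1}eX_2$. Pairing the vertical components against $DV_1$ and the horizontal components against $EV_1$ produces the $DV_1$-line of (ii) and the $A_{X_1}dX_2$ contribution to the $EV_1$-line.

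The crucial step is to handle the horizontal term $g_N(h\nabla_{X_1}eX_2,EV_1)$ by descending to the base. Since both entries are horizontal and $\psi$ is horizontally conformal, I would write $g_N(h\nabla_{X_1}eX_2,EV_1)=\lambda^{-2}g_B(\psi_*\nabla_{X_1}eX_2,\psi_*EV_1)$, then apply the definition of the second fundamental form (\ref{e.q:2.12}) to get $\psi_*\nabla_{X_1}eX_2=\nabla^\psi_{X_1}\psi_*eX_2-(\nabla\psi_*)(X_1,eX_2)$, and finally substitute Lemma \ref{lem1}(a) for $(\nabla\psi_*)(X_1,eX_2)$. Converting the resulting base inner products of the form $g_B(\psi_*Y,\psi_*Z)$ back via $g_B(\psi_*Y,\psi_*Z)=\lambda^2g_N(Y,Z)$ generates precisely the $\ln\lambda$ terms $X_1(\ln\lambda)eX_2$, $eX_2(\ln\lambda)X_1$ and $g_N(X_1,eX_2)\nabla\ln\lambda$ appearing in (ii).

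Finally I would antisymmetrize: subtracting the same expression with $X_1$ and $X_2$ interchanged assembles all the listed terms, the $\nabla^\psi$ contributions collecting into $\lambda^{-2}g_B(\nabla^\psi_{X_2}\psi_*eX_1-\nabla^\psi_{X_1}\psi_*eX_2,\psi_*EV_1)$ after moving them to the opposite side. The one non-mechanical point is the coefficient $2$ in the last term: the two gradient contributions combine as $g_N(X_1,eX_2)-g_N(X_2,eX_1)$, and using the skew-symmetry of $\varphi$ (which gives $g_N(X_2,eX_1)=g_N(X_2,\varphi X_1)=-g_N(X_1,\varphi X_2)=-g_N(X_1,eX_2)$, since $dX_1,dX_2$ are vertical and $X_1,X_2$ horizontal) this equals $2g_N(X_1,eX_2)$. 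Setting $g_N([X_1,X_2],V_1)=0$ then yields exactly the equivalence (i) $\Leftrightarrow$ (ii). I expect the main obstacle to be purely organizational, namely keeping the vertical/horizontal projections and the signs straight through the antisymmetrization, rather than conceptual.
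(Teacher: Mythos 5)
Your proposal is correct and follows essentially the same route as the paper: reduce $g_N([X_1,X_2],V_1)$ to $g_N(\nabla_{X_1}\varphi X_2-\nabla_{X_2}\varphi X_1,\varphi V_1)$ via the cosymplectic structure, decompose with $d,e,D,E$ and the O'Neill tensors, push the purely horizontal term to the base with (\ref{e.q:2.12}) and Lemma \ref{lem1}(a), and antisymmetrize. Your added remarks on the vacuity of the $\xi$-direction and on the origin of the coefficient $2$ from the skew-symmetry of $\varphi$ are correct details the paper leaves implicit.
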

\begin{proof}
In view of (\ref{e.q:2.2}), (\ref{e.q:2.3}) and (\ref{e.q:3.4}), we have
\begin{equation}
g_{N}([X_{1},X_{2}],V_{1})=g_{N}(\nabla_{X_{1}}\varphi X_{2}-\nabla_{X_{2}}\varphi X_{1},\varphi V_{1}). \label{e.q:3.17}
\end{equation}
for any $X_{1},X_{2}\in\Gamma((ker\psi_{*})^\perp)$ and $V_{1}\in\Gamma(ker\psi_{*})$. Then, using (\ref{e.q:3.1}), (\ref{e.q:3.2}) and (\ref{e.q:3.17}), we derive
\begin{align*}
g_{M}([X,Y],W)&=g_{N}(\nabla_{X_{1}}dX_{2},DV_{1})+g_{N}(\nabla_{X_{1}}dX_{2},EV_{1})\\
&+g_{N}(\nabla_{X_{1}}eX_{2},DV_{1})+g_{N}(\nabla_{X_{1}}eX_{2},EV_{1})\\ 
&-g_{N}(\nabla_{X_{2}}dX_{1},DV_{1})-g_{N}(\nabla_{X_{2}}dX_{1},EV_{1})\\
&-g_{N}(\nabla_{X_{2}}eX_{1},DV_{1})-g_{N}(\nabla_{X_{2}}eX_{1},EV_{1}).
\end{align*}
Using the property of $\psi,$ (\ref{e.q:2.9}) and (\ref{e.q:2.10}) we get
\begin{align*}
g_{N}([X_{1},X_{2}],V1)&=g_{N}(v\nabla_{X_{1}}dX_{2}+A_{X_{1}}eX_{2}-v\nabla_{X_{2}}dX_{1}-A_{X_{2}}eX_{1},DV_{1})\\
&+g_{N}(A_{X_{1}}dX_{2}+h\nabla_{X_{1}}eX_{2}-A_{X_{2}}dX_{1}-h\nabla_{X_{2}}eX_{1},EV_{1}) \\ &=g_{N}(v\nabla_{X_{1}}dX_{2}+A_{X_{1}}eX_{2}-v\nabla_{X_{2}}dX_{1}-A_{X_{2}}eX_{1},DV_{1})\\
&+g_{N}(A_{X_{1}}dX_{2}-A_{X_{2}}dX_{1},EV_{1}) \\
&+{\lambda^{-2}}g_{B}(\psi_{*}(h\nabla_{X_{1}}eX_{2}-\psi_{*}(h\nabla_{X_{2}}eX_{1},\psi_{*}EV_{1}).
\end{align*}
Thus, by (\ref{e.q:2.12}) and Lemma 3.2 we obtain
\begin{align*}
g_{N}([X_{1},X_{2}],V1)&=g_{N}(v\nabla_{X_{1}}dX_{2}+A_{X_{1}}eX_{2}-v\nabla_{X_{2}}dX_{1}-A_{X_{2}}eX_{1},DV_{1})\\
&+g_{N}(A_{X_{1}}dX_{2}-A_{X_{2}}dX_{1},EV_{1})\\
&+{\lambda^{-2}}g_{B}(-(\nabla\psi_{*})(X_{1},eX_{2})+\nabla_{X_{1}}^{\psi}\psi_{\ast}eX_{2}+(\nabla\psi_{*})(X_{2},eX_{1})-\nabla_{X_{2}}^{\psi}\psi_{\ast}eX_{1},\psi_{*}EV_{1})\\
&=g_{N}(v\nabla_{X_{1}}dX_{2}+A_{X_{1}}eX_{2}-v\nabla_{X_{2}}dX_{1}-A_{X_{2}}eX_{1},DV_{1})\\
&+g_{N}(A_{X_{1}}dX_{2}-A_{X_{2}}dX_{1},EV_{1})+{\lambda^{-2}}g_{B}(\nabla_{X_{1}}^{\psi}\psi_{\ast}eX_{2}-\nabla_{X_{2}}^{\psi}\psi_{\ast}eX_{1},\psi_{*}EV_{1})\\
&+{\lambda^{-2}}g_{B}(-X_{1}(\ln\lambda)\psi_{\ast}eX_{2}-eX_{2}(\ln\lambda)\psi_{\ast}X_{1}+g_{N}(X_{1},eX_{2})\psi_{\ast}(\nabla\ln\lambda)\\
&+X_{2}(\ln\lambda)\psi_{\ast}eX_{1}+eX_{1}(\ln\lambda)\psi_{\ast}X_{2}-g_{N}(X_{2},eX_{1})\psi_{\ast}(\nabla\ln\lambda),\psi_{\ast}EV_{1})\\
&=g_{N}(v\nabla_{X_{1}}dX_{2}+A_{X_{1}}eX_{2}-v\nabla_{X_{2}}dX_{1}-A_{X_{2}}eX_{1},DV_{1})\\
&+g_{N}(A_{X_{1}}dX_{2}-A_{X_{2}}dX_{1}-X_{1}(\ln\lambda)eX_{2}+X_{2}(\ln\lambda)eX_{1}-eX_{2}(\ln\lambda)X_{1}\\
&+eX_{1}(\ln\lambda)X_{2}+2g_{N}(X_{1},eX_{2})(\nabla\ln\lambda),EV_{1})\\
&+{\lambda^{-2}}g_{B}(\nabla_{X_{1}}^{\psi}\psi_{\ast}eX_{2}-\nabla_{X_{2}}^{\psi}\psi_{\ast}eX_{1},\psi_{*}EV_{1}).
\end{align*}
Hence, $(i)\Leftrightarrow (ii)$.
\end{proof}
 We  note that a horizontally conformal submersion $\psi:N\rightarrow B$ is said to be horizontally homothetic if the gradient of its dilation $\lambda$ is vertical, i.e.,  $h(grad\lambda)=0$ at $p\in N$, where $h$ is the projection on the horizontal space $(ker\psi_{*})^\perp.$

\begin{theorem}
Let $\psi$ be a conformal slant submersion from a cosymplectic manifold $(N,\varphi,\xi,\eta,g_{N})$ onto a Riemannian manifold $(B,g_{B})$. Suppose that the distribution $(ker\psi_{*})^\perp$ is integrable. Then the following conditions are equivalent to each other;
\begin{enumerate}
\item [(i)] The map $\psi$ is horizontally homothetic submersion.
\item [(ii)]$\begin{aligned}[t]
&{\lambda^{-2}}g_{B}(\nabla_{X_{2}}^{\psi}\psi_{*}eX_{1}-\nabla^{\psi}_{X_{1}}\psi_{*}eX_{2},\psi_{*}EV_{1})\\
&=g_{N}(v\nabla_{X_{1}}dX_{2}+A_{X_{1}}eX_{2}-v\nabla_{X_{2}}dX_{1}-A_{X_{2}}eX_{1},DV_{1})\\
&+g_{N}(A_{X_{1}}dX_{2}-A_{X_{2}}dX_{1},EV_{1})
\end{aligned}$
\end{enumerate}
for $X_{1},X_{2}\in\Gamma((ker\psi_{*})^\perp)$ and $V_{1}\in\Gamma(ker\psi_{*})$.
\end{theorem}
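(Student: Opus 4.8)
The plan is to read this off from the integrability criterion proved in the preceding theorem. Since $(ker\psi_*)^\perp$ is assumed integrable, that criterion holds as an identity for all $X_1,X_2\in\Gamma((ker\psi_*)^\perp)$ and $V_1\in\Gamma(ker\psi_*)$; its left-hand side coincides with the left-hand side of condition (ii) here, while its right-hand side exceeds that of (ii) by exactly $g_N(L,EV_1)$, where
\begin{equation*}
L:=-X_1(\ln\lambda)eX_2+X_2(\ln\lambda)eX_1-eX_2(\ln\lambda)X_1+eX_1(\ln\lambda)X_2+2g_N(X_1,eX_2)\nabla\ln\lambda
\end{equation*}
collects the dilation terms. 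Hence, under the integrability hypothesis, condition (ii) is equivalent to $g_N(L,EV_1)=0$ for all $X_1,X_2,V_1$, and the theorem reduces to showing that this vanishing is equivalent to $h(grad\,\lambda)=0$.

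The implication (i)$\Rightarrow$(ii) is immediate: if $\psi$ is horizontally homothetic then $\nabla\ln\lambda=\frac{1}{\lambda}\,grad\,\lambda$ is vertical, so each horizontal directional derivative $X_1(\ln\lambda),X_2(\ln\lambda),eX_1(\ln\lambda),eX_2(\ln\lambda)$ vanishes and $g_N(\nabla\ln\lambda,EV_1)=0$ because $EV_1$ is horizontal; thus $g_N(L,EV_1)=0$, which by the reduction above yields (ii).

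For the converse I would first establish that the distribution $\mu$ of (\ref{e.q:3.3}) is $\varphi$-invariant: for $Z\in\mu$ and vertical $V$ one has $g_N(\varphi Z,V)=-g_N(Z,\varphi V)=0$, so $\varphi Z$ is horizontal, and $g_N(\varphi Z,EV)=g_N(Z,V)-\eta(Z)\eta(V)=0$ by (\ref{e.q:2.2}), so $\varphi Z\perp E(ker\psi_*)$; thus $\varphi Z=eZ\in\mu$. Then I would extract $h(grad\,\lambda)=0$ in two stages from $g_N(L,EV_1)=0$. First, choosing $X_2\in\mu$ and $X_1=\varphi X_2\in\mu$, the vectors $eX_1=-X_2$ and $eX_2=\varphi X_2$ lie in $\mu$ and are orthogonal to $EV_1$, so the first four terms of $L$ contribute nothing and one is left with $2|X_2|^2\,g_N(\nabla\ln\lambda,EV_1)=0$; taking $X_2\neq0$ gives $g_N(\nabla\ln\lambda,EV_1)=0$, i.e. the $E(ker\psi_*)$-component of $\nabla\ln\lambda$ vanishes. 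Second, with this information the last term of $L$ is now always zero; choosing $X_1=Z\in\mu$ and $X_2=EV_2$, using $eEV_2=-EDV_2$ from (\ref{e.q:3.8}) together with the slant identity (\ref{e.q:3.15}), the equation $g_N(L,EV_1)=0$ collapses to
\begin{equation*}
\sin^2\omega\,\big(Z(\ln\lambda)\,g_N(DV_2,V_1)+(\varphi Z)(\ln\lambda)(g_N(V_2,V_1)-\eta(V_2)\eta(V_1))\big)=0 .
\end{equation*}
Taking $V_1=V_2$ a unit vertical field orthogonal to $\xi$ (so $g_N(DV_2,V_2)=0$ and $g_N(V_2,V_2)-\eta(V_2)^2=1$) yields $(\varphi Z)(\ln\lambda)=0$ for all $Z\in\mu$, and since $\varphi$ maps $\mu$ onto $\mu$ this says $Z'(\ln\lambda)=0$ for every $Z'\in\mu$. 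Combining the two stages, $g_N(\nabla\ln\lambda,Y)=0$ for every horizontal $Y$, that is $h(grad\,\lambda)=0$, which is (i).

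I expect the main obstacle to be precisely this converse: the bookkeeping must be arranged so that the chosen test vectors peel off the $E(ker\psi_*)$-component of the horizontal gradient first and the $\mu$-component second, and (\ref{e.q:3.15}) must be applied with the correct arguments. The two facts that make the separation work are the $\varphi$-invariance of $\mu$ and the nondegeneracy $g_N(\varphi X_2,eX_2)=|X_2|^2\neq0$ on $\mu$, valid because $\mu\perp\langle\xi\rangle$ and $\sin\omega\neq0$ for a proper slant submersion.
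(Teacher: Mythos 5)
Your proposal is correct and follows essentially the same route as the paper: reduce condition (ii), via the integrability identity of the preceding theorem, to the vanishing of the dilation terms, then test first with $X_{1}=\varphi X_{2}$ for $X_{2}\in\Gamma(\mu)$ to kill the $E(ker\psi_{*})$-component of $\nabla\ln\lambda$ and then with one slot in $\mu$ and the other in $E(ker\psi_{*})$ to kill the $\mu$-component (the paper takes $X_{1}=EV_{1}$, $X_{2}\in\Gamma(\mu)$ rather than your symmetric choice, but the computation is the same). Your explicit remark that the last step needs $\sin\omega\neq0$ (and implicitly $\mu\neq\{0\}$) is a caveat the paper shares but leaves unstated.
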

\begin{proof}
 For any $X_{1},X_{2}\in\Gamma((ker\psi_{*})^\perp)$ and $V_{1}\in\Gamma(ker\psi_{*}),$  by  hypothesis,  we get
\begin{eqnarray}
0=g_{N}([X_{1},X_{2}],V_{1})&=&g_{N}(v\nabla_{X_{1}}dX_{2}+A_{X_{1}}eX_{2}-v\nabla_{X_{2}}dX_{1}-A_{X_{2}}eX_{1},DV_{1})\nonumber\\
&+&g_{N}(A_{X_{1}}dX_{2}-A_{X_{2}}dX_{1}-X_{1}(\ln\lambda)eX_{2}+X_{2}(\ln\lambda)eX_{1}\\
&-&eX_{2}(\ln\lambda)X_{1}+eX_{1}(\ln\lambda)X_{2}+2g_{N}(X_{1},eX_{2})(\nabla\ln\lambda),EV_{1})\nonumber\\
&+&{\lambda^{-2}}g_{B}(\nabla_{X_{1}}^{\psi}\psi_{\ast}eX_{2}-\nabla_{X_{2}}^{\psi}\psi_{\ast}eX_{1},\psi_{*}EV_{1}).\label{e.q:3.18}
\end{eqnarray}
By (\ref{e.q:3.18}),  we obtain $(i)\Leftrightarrow (ii)$. Conversely, using  (\ref{e.q:3.18}), we have
\begin{eqnarray}
0&=&g_{N}(-X_{1}(\ln\lambda)eX_{2}+X_{2}(\ln\lambda)eX_{1}-eX_{2}(\ln\lambda)X_{1}+eX_{1}(\ln\lambda)X_{2}\nonumber\\
&+&2g_{N}(X_{1},eX_{2})(\nabla\ln\lambda),EV_{1}).\label{e.q:3.19}
\end{eqnarray}
If $X_{2}\in\Gamma(\mu),$ then by (\ref{e.q:3.3}) and (\ref{e.q:3.19}), we derive
\begin{eqnarray}
0&=&g_{N}(X_{2}(\ln\lambda)eX_{1}-\varphi X_{2}(\ln\lambda)X_{1}+2g_{N}(X_{1},\varphi X_{2})(\nabla\ln\lambda),EV_{1}).\label{e.q:3.20}
\end{eqnarray}
Now, taking $X_{1}=\varphi X_{2}$ in (\ref{e.q:3.20}), we obtain
\begin{eqnarray}
0&=&g_{N}(X_{2}(\ln\lambda)\varphi^{2}X_{2}-\varphi X_{2}(\ln\lambda)\varphi X_{2}+2g_{N}(\varphi X_{2},\varphi X_{2})(\nabla\ln\lambda),EV_{1})\nonumber\\
&=& 2g_{N}(X_{2},X_{2})g_{N}(\nabla\ln\lambda,EV_{1}), \label{e.q:3.21}
\end{eqnarray}
which implies
\begin{eqnarray}
g_{N}(\nabla\lambda,EV_{1})=0,\,\,\,\,V_{1}\in\Gamma(ker\psi_{\ast}).\label{e.q:3.22}
\end{eqnarray}
Taking $X_{1}=EV_{1}$ in (\ref{e.q:3.20}), we get
\begin{eqnarray}
0&=&g_{N}(X_{2}(\ln\lambda)eEV_{1}-\varphi X_{2}(\ln\lambda)EV_{1},EV_{1})\nonumber\\
&=& -\varphi X_{2}(\ln\lambda)g_{N}(EV_{1},EV_{1}),\nonumber\
\end{eqnarray}
which means
\begin{eqnarray}
g_{N}(\nabla\lambda,X_{3})=0,\,\,\,\,X_{3}\in\Gamma(\mu).\label{e.q:3.23}
\end{eqnarray}
Using (\ref{e.q:3.22}) and (\ref{e.q:3.23}),  we obtain $(ii)\Leftrightarrow(i).$
\end{proof}

\begin{theorem}
Let $\psi$ be a conformal slant submersion from a cosymplectic manifold $(N,\varphi,\xi,\eta,g_{N})$ onto a Riemannian manifold $(B,g_{B})$.  Then the following assertions are equivalent to each other;
\begin{enumerate}
\item [(i)] $(ker\psi_{*})^\perp$ defines a totally geodesic foliation on the total space.
\item [(ii)]$\begin{aligned}[t]
&\lambda^{-2}g_{B}(\nabla^{\psi}_{X_{1}}\psi_{\ast}X_{2},\psi_{\ast}EDV_{1})-\lambda^{-2}g_{B}(\nabla^{\psi}_{X_{1}}\psi_{\ast}eX_{2},\psi_{\ast}EV_{1})
=g_{N}(A_{X_{1}}dX_{2},EV_{1})\\
&+g_{N}(-X_{1}(\ln\lambda)eX_{2}-eX_{2}(\ln\lambda)X_{1}+g_{N}(X_{1},eX_{2})(\nabla\ln\lambda),EV_{1})\\
&-g_{N}(-X_{1}(\ln\lambda)X_{2}-X_{2}(\ln\lambda)X_{1}+g_{N}(X_{1},X_{2})(\nabla\ln\lambda),EDV_{1})
\end{aligned}$
\end{enumerate}
for $X_{1},X_{2}\in\Gamma((ker\psi_{*})^\perp)$ and $V_{1}\in\Gamma(ker\psi_{*}).$
\end{theorem}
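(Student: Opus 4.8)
The plan is to use the standard characterization that $(\ker\psi_{*})^\perp$ defines a totally geodesic foliation if and only if $g_{N}(\nabla_{X_{1}}X_{2},V_{1})=0$ for all $X_{1},X_{2}\in\Gamma((\ker\psi_{*})^\perp)$ and $V_{1}\in\Gamma(\ker\psi_{*})$; the whole proof then amounts to computing $g_{N}(\nabla_{X_{1}}X_{2},V_{1})$ and reorganizing it into the two sides of $(ii)$. First I would convert this inner product into a $\varphi$-form. Since $\varphi$ is skew-symmetric for $g_{N}$ (a consequence of (\ref{e.q:2.2})) and $N$ is cosymplectic so that $\varphi\nabla_{X_{1}}W=\nabla_{X_{1}}\varphi W$ by (\ref{e.q:2.3}), and since $\eta(\nabla_{X_{1}}X_{2})=0$ because $\nabla\xi=0$ by (\ref{e.q:2.4}) and $X_{2}$ is horizontal, one obtains $g_{N}(\nabla_{X_{1}}X_{2},V_{1})=g_{N}(\nabla_{X_{1}}\varphi X_{2},\varphi V_{1})$. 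Splitting $\varphi V_{1}=DV_{1}+EV_{1}$ via (\ref{e.q:3.1}) then separates this into a pairing against the vertical component $DV_{1}$ and one against the horizontal component $EV_{1}$.

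The key and most delicate step is the vertical term $g_{N}(\nabla_{X_{1}}\varphi X_{2},DV_{1})$. Here I would apply the skew-symmetry/cosymplectic trick a \emph{second} time, writing $g_{N}(\nabla_{X_{1}}\varphi X_{2},DV_{1})=g_{N}(\nabla_{X_{1}}\varphi^{2}X_{2},\varphi DV_{1})=-g_{N}(\nabla_{X_{1}}X_{2},\varphi DV_{1})$, and then using $\varphi DV_{1}=D^{2}V_{1}+EDV_{1}$ together with the slant identity (\ref{e.q:3.13}), namely $D^{2}=-\cos^{2}\omega(I-\eta\otimes\xi)$. This trades the awkward vertical component for the horizontal field $EDV_{1}$ and, crucially, reproduces a multiple $\cos^{2}\omega\,g_{N}(\nabla_{X_{1}}X_{2},V_{1})$ of the very quantity being computed; moving that multiple to the left leaves the factor $\sin^{2}\omega$ in front of $g_{N}(\nabla_{X_{1}}X_{2},V_{1})$. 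I expect this manoeuvre to be the main obstacle: one must recognize that the vertical piece has to be re-expressed through $\varphi$ and (\ref{e.q:3.13}) rather than expanded directly with the O'Neill tensors, since a naive four-term expansion produces $v\nabla_{X_{1}}dX_{2}$ and $A_{X_{1}}eX_{2}$ contributions that do not appear in $(ii)$.

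Finally I would reduce the two remaining horizontal pairings to the stated form. For the $EV_{1}$ term I split $\varphi X_{2}=dX_{2}+eX_{2}$ via (\ref{e.q:3.2}); the $dX_{2}$ part gives $g_{N}(A_{X_{1}}dX_{2},EV_{1})$ through (\ref{e.q:2.9}), while the $eX_{2}$ part is pushed through $\psi_{*}$ using horizontal conformality (so that $g_{N}(\cdot,\cdot)=\lambda^{-2}g_{B}(\psi_{*}\cdot,\psi_{*}\cdot)$ on horizontal vectors) and the second fundamental form formula of Lemma \ref{lem1}(a), producing $\lambda^{-2}g_{B}(\nabla^{\psi}_{X_{1}}\psi_{*}eX_{2},\psi_{*}EV_{1})$ together with the $EV_{1}$ conformal terms $-X_{1}(\ln\lambda)eX_{2}-eX_{2}(\ln\lambda)X_{1}+g_{N}(X_{1},eX_{2})(\nabla\ln\lambda)$. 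The $EDV_{1}$ pairing is handled the same way via (\ref{e.q:2.10}) and Lemma \ref{lem1}(a), yielding $\lambda^{-2}g_{B}(\nabla^{\psi}_{X_{1}}\psi_{*}X_{2},\psi_{*}EDV_{1})$ and the corresponding $EDV_{1}$ conformal terms.

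Collecting all contributions, the computation delivers $\sin^{2}\omega\,g_{N}(\nabla_{X_{1}}X_{2},V_{1})$ equal to the difference of the two sides of $(ii)$. Hence, on a proper conformal slant submersion (where $\sin\omega\neq0$), the quantity $g_{N}(\nabla_{X_{1}}X_{2},V_{1})$ vanishes for all such $X_{1},X_{2},V_{1}$ precisely when $(ii)$ holds, which by the characterization above is equivalent to $(ker\psi_{*})^\perp$ defining a totally geodesic foliation, establishing $(i)\Leftrightarrow(ii)$.
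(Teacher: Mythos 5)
Your proposal is correct and follows essentially the same route as the paper: pass to $g_{N}(\nabla_{X_{1}}\varphi X_{2},\varphi V_{1})$, split $\varphi V_{1}=DV_{1}+EV_{1}$, trade the $DV_{1}$ pairing for $\cos^{2}\omega\,g_{N}(\nabla_{X_{1}}X_{2},V_{1})-g_{N}(\nabla_{X_{1}}X_{2},EDV_{1})$ via (\ref{e.q:3.13}), and convert the two horizontal pairings with Lemma \ref{lem1}(a) and conformality. Your explicit remark that the conclusion needs $\sin\omega\neq 0$ is a point the paper leaves implicit, and is a welcome addition rather than a deviation.
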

\begin{proof}
 Given $X_{1},X_{2}\in\Gamma((ker\psi_{*})^\perp)$, $V_{1}\in\Gamma(ker\psi_{*}).$ and by  (\ref{e.q:2.2}), (\ref{e.q:2.3}) and (\ref{e.q:2.4}), we get
$$
g_{N}(\nabla_{X_{1}}X_{2},V_{1})=g_{N}(\nabla_{X_{1}}\varphi X_{2},\varphi V_{1}).
$$
From (\ref{e.q:2.3}), (\ref{e.q:2.9}), (\ref{e.q:2.10}), (\ref{e.q:3.1}), (\ref{e.q:3.2}) and (\ref{e.q:3.13}) we derive
\begin{align*}
g_{N}(\nabla_{X_{1}}X_{2},V_{1})&=g_{N}(\nabla_{X_{1}}\varphi X_{2},DV_{1}+EV_{1})\\
&=\cos^{2}\omega g_{N}(\nabla_{X_{1}}X_{2},V_{1})-g_{N}(\nabla_{X_{1}}X_{2},EDV_{1})\\
&+g_{N}(A_{X_{1}}dX_{2}+h\nabla_{X_{1}}eX_{2},EV_{1})\\
\sin^{2}\omega g_{N}(\nabla_{X_{1}}X_{2},V_{1})&=-g_{N}(\nabla_{X_{1}}X_{2},EDV_{1})+g_{N}(A_{X_{1}}dX_{2}+h\nabla_{X_{1}}eX_{2},EV_{1}).
\end{align*}
Using the property of $\psi,$ (\ref{e.q:2.12}) and Lemma 3.2, we have
\begin{eqnarray}
\sin^{2}\omega g_{N}(\nabla_{X_{1}}X_{2},V_{1})&=&-g_{N}(\nabla_{X_{1}}X_{2},EDV_{1})+g_{N}(A_{X_{1}}dX_{2},EV_{1})+\lambda^{-2}g_{B}(\psi_{\ast}h\nabla_{X_{1}}eX_{2},\psi_{\ast}EV_{1})\nonumber\\
&=&-g_{N}(\nabla_{X_{1}}X_{2},EDV_{1})+g_{N}(A_{X_{1}}dX_{2},EV_{1})\nonumber\\
&+&\lambda^{-2}g_{B}(-(\nabla\psi_{\ast})(X_{1},eX_{2})+\nabla^{\psi}_{X_{1}}\psi_{\ast}eX_{2},\psi_{\ast}EV_{1})\nonumber\\
&=&-g_{N}(\nabla_{X_{1}}X_{2},EDV_{1})+g_{N}(A_{X_{1}}dX_{2},EV_{1})+\lambda^{-2}g_{B}(\nabla^{\psi}_{X_{1}}\psi_{\ast}eX_{2},\psi_{\ast}EV_{1})\nonumber\\
&+&\lambda^{-2}g_{B}(-X_{1}(\ln\lambda)\psi_{\ast}eX_{2}-eX_{2}(\ln\lambda)\psi_{\ast}X_{1}+g_{N}(X_{1},eX_{2})\psi_{\ast}(\nabla\ln\lambda),\psi_{\ast}EV_{1})\nonumber\\
&=&-g_{N}(\nabla_{X_{1}}X_{2},EDV_{1})+g_{N}(A_{X_{1}}dX_{2},EV_{1})+\lambda^{-2}g_{B}(\nabla^{\psi}_{X_{1}}\psi_{\ast}eX_{2},\psi_{\ast}EV_{1})\nonumber\\
&+&g_{N}(-X_{1}(\ln\lambda)eX_{2}-eX_{2}(\ln\lambda)X_{1}+g_{N}(X_{1},eX_{2})(\nabla\ln\lambda),EV_{1}).\nonumber\
\end{eqnarray}
On the other hand,
\begin{eqnarray}
g_{N}(\nabla_{X_{1}}X_{2},EDV_{1})&=&\lambda^{-2}g_{B}(\psi_{\ast}\nabla_{X_{1}}X_{2},\psi_{\ast}EDV_{1})\nonumber\\
&=&\lambda^{-2}g_{B}(-(\nabla\psi_{\ast})(X_{1},X_{2})+\nabla^{\psi}_{X_{1}}\psi_{\ast}X_{2},\psi_{\ast}EDV_{1})\nonumber\\
&=&\lambda^{-2}g_{B}(-X_{1}(\ln\lambda)\psi_{\ast}X_{2}-X_{2}(\ln\lambda)\psi_{\ast}X_{1}+g_{N}(X_{1},X_{2})\psi_{\ast}(\nabla\ln\lambda)\nonumber\\
&+&\nabla^{\psi}_{X_{1}}\psi_{\ast}X_{2},\psi_{\ast}EDV_{1})\nonumber\\
&=&g_{N}(-X_{1}(\ln\lambda)X_{2}-X_{2}(\ln\lambda)X_{1}+g_{N}(X_{1},X_{2})(\nabla\ln\lambda),EDV_{1})\nonumber\\
&+&\lambda^{-2}g_{B}(\nabla^{\psi}_{X_{1}}\psi_{\ast}X_{2},\psi_{\ast}EDV_{1}).\nonumber\
\end{eqnarray}
Thus, we obtain $(i)\Leftrightarrow(ii)$.
\end{proof}

\begin{theorem}
Let $\psi$ be a conformal slant submersion from a cosymplectic manifold $(N,\varphi,\xi,\eta,g_{N})$ onto a Riemannian manifold $(B,g_{B})$. Suppose that the distribution $(ker\psi_{*})^\perp$ defines a totally geodesic foliation on the total space. Then the following assertions are equivalent to each other;
\begin{enumerate}
\item [(i)] The map $\psi$ is a horizontally homothetic submersion.
\item[(ii)] $\lambda^{-2}g_{B}(\nabla^{\psi}_{X_{1}}\psi_{\ast}X_{2},\psi_{\ast}EDV_{1})-\lambda^{-2}g_{B}(\nabla^{\psi}_{X_{1}}\psi_{\ast}eX_{2},\psi_{\ast}EV_{1})=g_{N}(A_{X_{1}}dX_{2},EV_{1})$
\end{enumerate}
for any $X_{2},X_{1}\in\Gamma((ker\psi_{*})^\perp)$ and $V_{1}\in\Gamma(ker\psi_{*})$.
\end{theorem}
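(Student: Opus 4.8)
The plan is to deduce the statement from the preceding theorem instead of repeating its long computation. Since $(ker\psi_*)^\perp$ defines a totally geodesic foliation, the equivalence proved there shows that its condition (ii) holds identically for all $X_1,X_2\in\Gamma((ker\psi_*)^\perp)$ and $V_1\in\Gamma(ker\psi_*)$. Comparing that identity with the condition (ii) of the present theorem, the two differ only in the terms carrying derivatives of $\ln\lambda$, namely
\[
\Theta_1=g_N(-X_1(\ln\lambda)eX_2-eX_2(\ln\lambda)X_1+g_N(X_1,eX_2)(\nabla\ln\lambda),EV_1)
\]
and
\[
\Theta_2=g_N(-X_1(\ln\lambda)X_2-X_2(\ln\lambda)X_1+g_N(X_1,X_2)(\nabla\ln\lambda),EDV_1).
\]
Hence the theorem reduces to the assertion that $\Theta_1-\Theta_2=0$ for all admissible $X_1,X_2,V_1$ if and only if $\psi$ is horizontally homothetic.

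For $(i)\Rightarrow(ii)$ I would only use the definition of horizontal homothety: $h(grad\lambda)=0$, so $\nabla\ln\lambda$ is vertical. Then $X_1(\ln\lambda)$, $X_2(\ln\lambda)$ and $eX_2(\ln\lambda)$ all vanish, being derivatives along horizontal fields, while $g_N(\nabla\ln\lambda,EV_1)=g_N(\nabla\ln\lambda,EDV_1)=0$ since $EV_1$ and $EDV_1$ are horizontal. Thus $\Theta_1=\Theta_2=0$ and (ii) follows.

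The converse $(ii)\Rightarrow(i)$ is the substantive part, and I would mimic the substitution argument from the corresponding theorem for the integrable case. Assume $\Theta_1-\Theta_2=0$ for every choice and restrict $X_2$ to $\Gamma(\mu)$, so that $eX_2=\varphi X_2\in\Gamma(\mu)$. First take $X_1=\varphi X_2$: by the orthogonality $E(ker\psi_*)\perp\mu$ coming from (\ref{e.q:3.3}) and the skew-symmetry of $\varphi$, the group $\Theta_2$ vanishes entirely and $\Theta_1$ collapses to $g_N(X_2,X_2)\,g_N(\nabla\ln\lambda,EV_1)$, which forces $g_N(\nabla\lambda,EV_1)=0$ for all $V_1$. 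Next take $X_1=EV_1$: every term dies by orthogonality except one proportional to $g_N(EV_1,EDV_1)$, and this factor is zero, because (\ref{e.q:3.15}) gives $g_N(EV_1,EDV_1)=\sin^2\omega(g_N(V_1,DV_1)-\eta(V_1)\eta(DV_1))$ with $g_N(V_1,DV_1)=0$ and $\eta(DV_1)=0$ (both immediate from $\varphi$ being skew-symmetric and $\varphi\xi=0$). What remains is $\varphi X_2(\ln\lambda)\,g_N(EV_1,EV_1)=0$, whence $X_3(\ln\lambda)=0$ for all $X_3\in\Gamma(\mu)$, as $\varphi$ maps $\mu$ isomorphically onto itself. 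Combining the two conclusions, $\nabla\lambda$ is orthogonal to $E(ker\psi_*)\oplus\mu=(ker\psi_*)^\perp$, i.e. $h(grad\lambda)=0$.

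The main obstacle, and the reason this theorem is not a verbatim copy of its integrable-case analogue, is the extra group $\Theta_2$ attached to $EDV_1$: it couples $X_2$ directly rather than through $eX_2$ and mixes the $E(ker\psi_*)$ and $\mu$ components of $\nabla\lambda$. The key point that makes both substitutions close is the identity $g_N(EV_1,EDV_1)=0$; verifying it cleanly from the slant metric relation (\ref{e.q:3.15}) is the one step that genuinely requires care.
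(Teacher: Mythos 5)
Your proposal is correct and follows essentially the same route as the paper: both deduce the identity from the preceding theorem under the totally geodesic hypothesis, isolate the two groups of $\ln\lambda$ terms, and for the converse restrict $X_{2}$ to $\Gamma(\mu)$ and make the two substitutions $X_{1}=\varphi X_{2}$ and $X_{1}=EV_{1}$, using $g_{N}(EV_{1},EDV_{1})=0$ to kill the coupling term. Your explicit verification of that last identity is a welcome detail the paper only gestures at by citing its metric relation for $E$.
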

\begin{proof}
Given  $X_{2},X_{1}\in\Gamma((ker\psi_{*})^\perp)$ and $V_{1}\in\Gamma(ker\psi_{*})$, from Theorem 4.5, we get
\begin{align}
&\lambda^{-2}g_{B}(\nabla^{\psi}_{X_{1}}\psi_{\ast}X_{2},\psi_{\ast}EDV_{1})-\lambda^{-2}g_{B}(\nabla^{\psi}_{X_{1}}\psi_{\ast}eX_{2},\psi_{\ast}EV_{1})\nonumber\\
&=g_{N}(A_{X_{1}}dX_{2},EV_{1})\nonumber\\
&+g_{N}(-X_{1}(\ln\lambda)eX_{2}-eX_{2}(\ln\lambda)X_{1}+g_{N}(X_{1},eX_{2})(\nabla\ln\lambda),EV_{1})\nonumber\\
&-g_{N}(-X_{1}(\ln\lambda)X_{2}-X_{2}(\ln\lambda)X_{1}+g_{N}(X_{1},X_{2})(\nabla\ln\lambda),EDV_{1}),\label{e.q:3.24}
\end{align}
which implies $(i)\Leftrightarrow(ii).$ Conversely, by (\ref{e.q:3.24}), we obtain
\begin{eqnarray}
0&=&g_{N}(-X_{1}(\ln\lambda)eX_{2}-eX_{2}(\ln\lambda)X_{1}+g_{N}(X_{1},eX_{2})(\nabla\ln\lambda),EV_{1})\nonumber\\
&-&g_{N}(-X_{1}(\ln\lambda)X_{2}-X_{2}(\ln\lambda)X_{1}+g_{N}(X_{1},X_{2})(\nabla\ln\lambda),EDV_{1}),\label{e.q:3.25}
\end{eqnarray}
Now, taking $X_{2}\in\Gamma(\mu),$ and  using (\ref{e.q:3.25}), we arrive at
\begin{eqnarray}
0&=&g_{N}(-\varphi X_{2}(\ln\lambda)X_{1}+g_{N}(X_{1},\varphi X_{2})(\nabla\ln\lambda),EV_{1})\nonumber\\
&-&g_{N}(-X_{2}(\ln\lambda)X_{1}+g_{N}(X_{1},X_{2})(\nabla\ln\lambda),EDV_{1}),\label{e.q:3.26}
\end{eqnarray}
 Taking $X_{1}=\varphi X_{2}$ in (\ref{e.q:3.26})  we find
 \begin{eqnarray}
0=g_{N}(\varphi X_{1},\varphi X_{2})(\nabla\ln\lambda),EV_{1}),\label{e.q:3.27}
\end{eqnarray}
 which implies
 \begin{eqnarray}
0=g_{N}(\nabla\lambda,EV_{1}),\,\,\,V_{1}\in\Gamma(ker\psi_{*}).\label{e.q:3.28}
\end{eqnarray}
Taking $X_{1}=EX_{2}$ in (\ref{e.q:3.26}) and by (\ref{e.q:3.3}), (\ref{e.q:3.15}) we have
\begin{eqnarray}
0&=&-g_{N}(EV_{1},EV_{1})g_{N}(\varphi X_{2},\nabla\ln\lambda)+g_{N}(X_{2},\nabla\ln\lambda)g_{N}(EV_{1},EDV_{1})\nonumber\\
&=&-g_{N}(EV_{1},EV_{1})g_{N}(\varphi X_{2},\nabla\ln\lambda),\label{e.q:3.29}
\end{eqnarray}
which implies
\begin{eqnarray}
0=g_{N}(\nabla\lambda,X_{3}),\,\,\,X_{3}\in\Gamma(\mu).\label{e.q:3.30}
\end{eqnarray}
Using (\ref{e.q:3.28}) and (\ref{e.q:3.30}), we obtain $(ii)\Leftrightarrow(i).$
\end{proof}
\begin{theorem}
Let $\psi$ be a conformal slant submersion from a cosymplectic manifold $(N,\varphi,\xi,\eta,g_{N})$ onto a Riemannian manifold $(B,g_{B})$.   Then for any $V_{1},V_{2}\in\Gamma(ker\psi_{*})$ and $X_{1}\in\Gamma((ker\psi_{*})^\perp)$ the following assertions are equivalent to each other;
\begin{enumerate}
\item [(i)] The distribution $ker\psi_{*}$ defines a totally geodesic foliation on the total space.
\item [(ii)]$\begin{aligned}[t]
g_{N}(\nabla_{V_{1}}EDV_{2},X_{1})=g_{N}(T_{V_{1}}EV_{2},dX_{1})+g_{N}(h\nabla_{V_{1}}EV_{2},eX_{1}).
\end{aligned}$
\end{enumerate}
\end{theorem}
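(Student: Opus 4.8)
The plan is to characterize (i) by the vanishing $g_{N}(\nabla_{V_{1}}V_{2},X_{1})=0$ for all $V_{1},V_{2}\in\Gamma(ker\psi_{*})$ and $X_{1}\in\Gamma((ker\psi_{*})^\perp)$, and then to rewrite this left-hand side in terms of the quantities appearing in (ii). First I would exploit the cosymplectic structure: since $\eta(X_{1})=g_{N}(X_{1},\xi)=0$ (as $\xi$ is vertical and $X_{1}$ is horizontal), (\ref{e.q:2.2}) gives $g_{N}(\nabla_{V_{1}}V_{2},X_{1})=g_{N}(\varphi\nabla_{V_{1}}V_{2},\varphi X_{1})$, and (\ref{e.q:2.3}) (parallelism of $\varphi$) turns this into $g_{N}(\nabla_{V_{1}}\varphi V_{2},\varphi X_{1})$. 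Substituting $\varphi V_{2}=DV_{2}+EV_{2}$ from (\ref{e.q:3.1}) splits the expression into a $D$-term $g_{N}(\nabla_{V_{1}}DV_{2},\varphi X_{1})$ and an $E$-term $g_{N}(\nabla_{V_{1}}EV_{2},\varphi X_{1})$.

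The key step is the $D$-term, where I expect the real work to lie. Since $DV_{2}$ is again vertical, I would apply the same device a second time: using (\ref{e.q:2.2}) together with $\eta(\varphi X_{1})=0$ and $\varphi^{2}X_{1}=-X_{1}$ (because $\eta(X_{1})=0$), the $D$-term becomes $-g_{N}(\nabla_{V_{1}}\varphi DV_{2},X_{1})$ after one more use of (\ref{e.q:2.3}). Writing $\varphi DV_{2}=D^{2}V_{2}+EDV_{2}$ via (\ref{e.q:3.1}) and invoking Theorem \ref{teo1}, namely $D^{2}=-\cos^{2}\omega(I-\eta\otimes\xi)$, together with $\nabla_{V_{1}}\xi=0$ from (\ref{e.q:2.4}), the $D^{2}V_{2}$ piece collapses to $\cos^{2}\omega\,g_{N}(\nabla_{V_{1}}V_{2},X_{1})$ (the $\xi$-correction pairs trivially against the horizontal $X_{1}$). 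Collecting the contributions yields the master identity
\begin{equation*}
\sin^{2}\omega\,g_{N}(\nabla_{V_{1}}V_{2},X_{1})=-g_{N}(\nabla_{V_{1}}EDV_{2},X_{1})+g_{N}(\nabla_{V_{1}}EV_{2},\varphi X_{1}).
\end{equation*}

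Finally I would process the surviving $E$-term by decomposing $\varphi X_{1}=dX_{1}+eX_{1}$ via (\ref{e.q:3.2}) and splitting $\nabla_{V_{1}}EV_{2}=T_{V_{1}}EV_{2}+h\nabla_{V_{1}}EV_{2}$ into its vertical and horizontal parts using (\ref{e.q:2.8}). As $dX_{1}$ is vertical and $eX_{1}$ horizontal, only the matching component survives in each pairing, so $g_{N}(\nabla_{V_{1}}EV_{2},\varphi X_{1})=g_{N}(T_{V_{1}}EV_{2},dX_{1})+g_{N}(h\nabla_{V_{1}}EV_{2},eX_{1})$. Substituting this back into the master identity shows that $g_{N}(\nabla_{V_{1}}V_{2},X_{1})=0$ for all admissible arguments if and only if (ii) holds, after dividing through by $\sin^{2}\omega$; thus the argument genuinely uses that the submersion is proper slant ($\omega\neq 0$), and this gives (i)$\Leftrightarrow$(ii). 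The main obstacle is the careful bookkeeping of vertical versus horizontal components across the two applications of the $\varphi$-compatibility and parallelism identities; it is precisely the $\sin^{2}\omega$ factor produced by Theorem \ref{teo1} that renders the equivalence clean.
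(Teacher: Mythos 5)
Your proposal is correct and follows essentially the same route as the paper's proof: two applications of the $\varphi$-compatibility of $g_{N}$ together with $\nabla\varphi=0$, the decompositions $\varphi V=DV+EV$ and $\varphi X=dX+eX$, the identity $D^{2}=-\cos^{2}\omega(I-\eta\otimes\xi)$ with $\nabla\xi=0$, and the splitting $\nabla_{V_{1}}EV_{2}=T_{V_{1}}EV_{2}+h\nabla_{V_{1}}EV_{2}$, arriving at exactly the paper's master identity $\sin^{2}\omega\,g_{N}(\nabla_{V_{1}}V_{2},X_{1})=-g_{N}(\nabla_{V_{1}}EDV_{2},X_{1})+g_{N}(T_{V_{1}}EV_{2},dX_{1})+g_{N}(h\nabla_{V_{1}}EV_{2},eX_{1})$. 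Your explicit observation that the final equivalence requires dividing by $\sin^{2}\omega$, hence $\omega\neq 0$, is a caveat the paper leaves implicit and is worth retaining.
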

\begin{proof}
For $V_{1},V_{2}\in\Gamma(ker\psi_{*})$ and $X_{1}\in\Gamma((ker\psi_{*})^\perp)$, by  (\ref{e.q:2.2}) and (\ref{e.q:2.3}) we obtain $g_{N}(\nabla_{V_{1}}V_{2},X_{1})=g_{N}(\nabla_{V_{1}}\varphi V_{2},\varphi X_{1}).$
Using (\ref{e.q:2.2}), (\ref{e.q:2.8}),(\ref{e.q:3.1}), (\ref{e.q:3.2})  and (\ref{e.q:3.13}) we arrive at
\begin{align*}
g_{N}(\nabla_{V_{1}}V_{2},X_{1})&=g_{N}(\nabla_{V_{1}}DV_{2}+EV_{2},\varphi X_{1})\\
&=-g_{N}(\nabla_{V_{1}}D^{2}V_{2}+\nabla_{V_{1}}EDV_{2},X_{1})+g_{N}(\nabla_{V_{1}}EV_{2},dX_{1}+eX_{1})\\
&=\cos^{2}\omega g_{N}(\nabla_{V_{1}}V_{2},X_{1})-g_{N}(\nabla_{V_{1}}EDV_{2},X_{1})\\
&+g_{N}(T_{V_{1}}EV_{2},dX_{1})+g_{N}(h\nabla_{V_{1}}EV_{2},eX_{1}).
\end{align*}
Thus, we have
\begin{align*}
\sin^{2}\omega g_{N}(\nabla_{V_{1}}V_{2},X_{1})&=-g_{N}(\nabla_{V_{1}}EDV_{2},X_{1})\\
&+g_{N}(T_{V_{1}}EV_{2},dX_{1})+g_{N}(h\nabla_{V_{1}}EV_{2},eX_{1})
\end{align*}
so that we obtain $(i)\Leftrightarrow(ii).$
\end{proof}
Now, we are going to investigate the harmonicity of $\psi.$

\cite{BW} Let $\psi$ be a horizontally conformal  submersion from a Riemannian  manifold $(N,g_{N})$ onto a Riemannian manifold $(B,g_{B})$ with dilation $\lambda$.
Then, the tension field $\tau(\psi)$ of $\psi$ is given by
\begin{eqnarray}
\tau(\psi)=-n\psi_{\ast}H+(2-s)\psi_{\ast}(\nabla\ln\lambda),\label{e.q:3.31}
\end{eqnarray}
where $H$ is the mean curvature vector field of the distribution $ker \psi_{\ast},$ $dimker \psi_{\ast}=n, $ $dimB=s.$\\
Using Theorem 4.2 and (\ref{e.q:3.31}), we have
\begin{corollary}
Let $\psi$ be a conformal slant submersion from a cosymplectic manifold $(N,\varphi,\xi,\eta,g_{N})$ onto a Riemannian manifold $(B,g_{B})$ with $dimB>2$.   Suppose that $E$ is parallel with the slant angle $\omega\in[0,\frac{\pi}{2})$. Then,  the following assertions are equivalent to each other;
\begin{enumerate}
\item [(i)] The map $\psi$ is harmonic.
\item [(ii)] The map $\psi$ is a horizontally homothetic submersion.
\end{enumerate}
\end{corollary}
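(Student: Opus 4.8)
The plan is to combine Theorem 4.2 with the tension-field formula (\ref{e.q:3.31}) and then exploit the hypothesis $\dim B>2$ to isolate the horizontal part of the gradient of the dilation. First I would invoke Theorem 4.2: since $E$ is parallel and the slant angle satisfies $\omega\in[0,\frac{\pi}{2})$, every fibre of $\psi$ is minimal, which is to say that the mean curvature vector field $H$ of the distribution $ker\psi_{*}$ vanishes identically. Substituting $H=0$ into (\ref{e.q:3.31}) collapses the formula to
$$\tau(\psi)=(2-s)\psi_{*}(\nabla\ln\lambda),$$
where $s=\dim B$.

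Next I would use the standing assumption $s>2$, so that the scalar factor $2-s$ is nonzero. Consequently $\tau(\psi)=0$ if and only if $\psi_{*}(\nabla\ln\lambda)=0$. Because $\psi_{*}$ annihilates the vertical distribution and restricts to a conformal, hence injective, isomorphism on $(ker\psi_{*})^{\perp}$, the vanishing $\psi_{*}(\nabla\ln\lambda)=0$ is equivalent to $h(\nabla\ln\lambda)=0$, i.e. the gradient of $\ln\lambda$ is vertical. Since $\lambda>0$ gives $\nabla\ln\lambda=\frac{1}{\lambda}\nabla\lambda$, this is in turn equivalent to $h(grad\,\lambda)=0$, which is precisely the definition of a horizontally homothetic submersion recorded just before Theorem 4.4. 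Reading this chain of equivalences forward yields $(i)\Rightarrow(ii)$, and backward yields $(ii)\Rightarrow(i)$.

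The argument is essentially bookkeeping once Theorem 4.2 is in hand; the only point that genuinely requires a hypothesis is the nonvanishing of $2-s$, which is why $\dim B>2$ cannot be dropped. Indeed, when $s=2$ the tension field vanishes automatically as soon as the fibres are minimal, so harmonicity would no longer detect the horizontal behaviour of $\lambda$ and the equivalence would fail. The slant-angle restriction $\omega\neq\frac{\pi}{2}$ enters only because it is the hypothesis under which Theorem 4.2 guarantees minimal fibres; no further obstacle arises, and I expect the identification of $\psi_{*}(\nabla\ln\lambda)=0$ with horizontal homotheticity to be the one step worth stating carefully.
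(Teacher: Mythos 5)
Your argument is correct and is exactly the route the paper intends: the paper gives no written proof beyond the remark ``Using Theorem 4.2 and (\ref{e.q:3.31})'', and your proposal simply fills in those details --- minimality of the fibres from Theorem 4.2 kills the $H$-term, $s>2$ makes the factor $2-s$ nonzero, and injectivity of $\psi_{*}$ on the horizontal distribution identifies $\psi_{*}(\nabla\ln\lambda)=0$ with horizontal homotheticity. Nothing is missing.
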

\begin{corollary}
Let $\psi$ be a conformal slant submersion from a cosymplectic manifold $(N,\varphi,\xi,\eta,g_{N})$ onto a Riemannian manifold $(B,g_{B})$ with $dimB=2$.   Suppose that $E$ is parallel with the slant angle $\omega\in[0,\frac{\pi}{2})$. Then,  the map $\psi$ is harmonic.
\end{corollary}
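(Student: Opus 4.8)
The plan is to combine the minimality of the fibres furnished by Theorem 4.2 with the tension field formula (\ref{e.q:3.31}), specialised to the case $\dim B = 2$. This is the degenerate analogue of the preceding corollary, where the coefficient of the dilation term in the tension field collapses automatically.

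First I would invoke Theorem 4.2. Since $E$ is assumed parallel and the slant angle satisfies $\omega\in[0,\frac{\pi}{2})$, that theorem guarantees that every fibre of $\psi$ is minimal. By definition this means the mean curvature vector field $H$ of the vertical distribution $\ker\psi_{\ast}$ vanishes identically, since $T$ restricted to vertical vectors is precisely the second fundamental form of the fibres (as recorded after (\ref{e.q:2.10})). Hence $H=0$, and consequently $\psi_{\ast}H=0$.

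Next I would substitute into the tension field formula
$$
\tau(\psi)=-n\,\psi_{\ast}H+(2-s)\,\psi_{\ast}(\nabla\ln\lambda),
$$
where $n=\dim\ker\psi_{\ast}$ and $s=\dim B$. The first summand vanishes because $\psi_{\ast}H=0$. The decisive point for the present statement is that $s=\dim B=2$, so the scalar coefficient $(2-s)=0$ annihilates the second summand as well, independently of the behaviour of the dilation $\lambda$. Therefore $\tau(\psi)=0$, and $\psi$ is harmonic.

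There is essentially no obstacle here: the result is an immediate specialisation of Corollary~4.1, where the horizontally homothetic hypothesis was needed precisely to kill the term $(2-s)\psi_{\ast}(\nabla\ln\lambda)$ in the range $s>2$. When $s=2$ this term disappears by itself, so no assumption on $\lambda$ beyond the standing hypotheses is required. The only step warranting a word of care is the passage from \emph{minimal fibres} to $H=0$, which is just the definition of the mean curvature vector of the fibres; everything else is direct substitution.
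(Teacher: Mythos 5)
Your argument is correct and is precisely the one the paper intends: the corollary is stated immediately after the remark ``Using Theorem 4.2 and (\ref{e.q:3.31}), we have \ldots'', so the paper's own (implicit) proof is exactly your combination of minimal fibres ($H=0$) from Theorem 4.2 with the vanishing of the coefficient $(2-s)$ when $s=\dim B=2$ in the tension field formula. No further comment is needed.
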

Let $\psi$ be a conformal slant submersion from a cosymplectic manifold $(N,\varphi,\xi,\eta,g_{N})$ onto a Riemannian manifold $(B,g_{B})$. For $V_{1}\in\Gamma(ker\psi_{*})$ and $X_{1}\in\Gamma(\mu),$ we call the map $\psi (E ker\psi_{\ast},\mu)-$ totally geodesic if it satisfies $(\nabla\psi_{\ast})(EV_{1},X_{1})=0.$
\begin{theorem}
Let $\psi$ be a conformal slant submersion from a cosymplectic manifold $(N,\varphi,\xi,\eta,g_{N})$ onto a Riemannian manifold $(B,g_{B})$. Then,  the following assertions are equivalent to each other;
\begin{enumerate}
\item [(i)] The map $\psi$ is a horizontally homothetic submersion.
\item [(ii)] The map $\psi$ is $(E ker\psi_{\ast},\mu)-$ totally geodesic.
\end{enumerate}
\end{theorem}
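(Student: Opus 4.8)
The plan is to apply Lemma~\ref{lem1}(a) directly, since both arguments are horizontal: by (\ref{e.q:3.1}) we have $EV_{1}\in\Gamma(E(ker\psi_{*}))\subset\Gamma((ker\psi_{*})^\perp)$, and $X_{1}\in\Gamma(\mu)\subset\Gamma((ker\psi_{*})^\perp)$ by (\ref{e.q:3.3}). First I would compute
\begin{equation*}
(\nabla\psi_{*})(EV_{1},X_{1})=EV_{1}(\ln\lambda)\psi_{*}X_{1}+X_{1}(\ln\lambda)\psi_{*}EV_{1}-g_{N}(EV_{1},X_{1})\psi_{*}(\nabla\ln\lambda).
\end{equation*}
The orthogonal decomposition (\ref{e.q:3.3}) gives $E(ker\psi_{*})\perp\mu$, so $g_{N}(EV_{1},X_{1})=0$ and the last term drops, leaving the identity
\begin{equation*}
(\nabla\psi_{*})(EV_{1},X_{1})=EV_{1}(\ln\lambda)\psi_{*}X_{1}+X_{1}(\ln\lambda)\psi_{*}EV_{1}. \tag{$\ast$}
\end{equation*}

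For $(i)\Rightarrow(ii)$, if $\psi$ is horizontally homothetic then $h(grad\lambda)=0$, and since $\lambda>0$ also $h(grad\ln\lambda)=0$. Because $EV_{1}$ and $X_{1}$ are horizontal, the directional derivatives $EV_{1}(\ln\lambda)=g_{N}(grad\ln\lambda,EV_{1})$ and $X_{1}(\ln\lambda)=g_{N}(grad\ln\lambda,X_{1})$ both vanish, so $(\ast)$ yields $(\nabla\psi_{*})(EV_{1},X_{1})=0$; that is, $\psi$ is $(E\,ker\psi_{*},\mu)$-totally geodesic.

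For the converse $(ii)\Rightarrow(i)$, the crucial point is that $\psi_{*}X_{1}$ and $\psi_{*}EV_{1}$ are orthogonal in $TB$: by horizontal conformality $g_{B}(\psi_{*}X_{1},\psi_{*}EV_{1})=\lambda^{2}g_{N}(X_{1},EV_{1})=0$, again since $X_{1}\in\mu$ and $EV_{1}\in E(ker\psi_{*})$. As $\psi$ is a submersion without critical points, $\psi_{*}$ is injective on the horizontal space, so for $X_{1},EV_{1}\neq0$ the images $\psi_{*}X_{1},\psi_{*}EV_{1}$ are nonzero and hence linearly independent. Imposing $(\nabla\psi_{*})(EV_{1},X_{1})=0$ in $(\ast)$ then forces $EV_{1}(\ln\lambda)=0$ and $X_{1}(\ln\lambda)=0$ separately. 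Letting $V_{1}$ range over $\Gamma(ker\psi_{*})$ and $X_{1}$ over $\Gamma(\mu)$ shows $grad\ln\lambda$ is orthogonal both to $E(ker\psi_{*})$ and to $\mu$; since these span $(ker\psi_{*})^\perp$ by (\ref{e.q:3.3}), we conclude $h(grad\ln\lambda)=0$, hence $h(grad\lambda)=0$, i.e. $\psi$ is horizontally homothetic.

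I expect the main obstacle to be the linear-independence step in the converse: one must ensure $\psi_{*}X_{1}$ and $\psi_{*}EV_{1}$ are genuinely independent rather than merely orthogonal, which rests on $\psi$ having no critical points. There is also a degeneracy to flag, namely that in the conformal holomorphic case $E(ker\psi_{*})=0$ leaves no nonzero $EV_{1}$ to pair with $X_{1}$, so the extraction of both vanishing conditions is complete only for proper conformal slant submersions, where the spanning $(ker\psi_{*})^\perp=E(ker\psi_{*})\oplus\mu$ furnishes the needed test vectors in each summand.
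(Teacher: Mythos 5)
Your proposal is correct and follows essentially the same route as the paper: apply Lemma~2.2(a) to the horizontal pair $(EV_{1},X_{1})$, drop the last term by the orthogonality $E(\ker\psi_{*})\perp\mu$ from (\ref{e.q:3.3}), and use the linear independence of $\psi_{*}X_{1}$ and $\psi_{*}EV_{1}$ (coming from $g_{B}(\psi_{*}X_{1},\psi_{*}EV_{1})=\lambda^{2}g_{N}(X_{1},EV_{1})=0$) to split the converse into the two vanishing conditions. You are in fact slightly more careful than the paper, which does not spell out the spanning argument $(\ker\psi_{*})^\perp=E(\ker\psi_{*})\oplus\mu$ nor flag the degenerate case $E(\ker\psi_{*})=0$.
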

\begin{proof}
For $V_{1}\in\Gamma(ker\psi_{*})$, $X_{1}\in\Gamma(\mu),$  from Lemma 3.2, we obtain
\begin{eqnarray}
(\nabla \psi_*)(EV_{1},X_{1})&=&EV_{1}(\ln\lambda)\psi_* X_{1}+X_{1}(\ln\lambda)\psi_* EV_{1}-g_{N}(EV_{1},X_{1})\psi_*(\nabla\ln\lambda)\nonumber\\
&=&EV_{1}(\ln\lambda)\psi_* X_{1}+X_{1}(\ln\lambda)\psi_* EV_{1}.\nonumber\
\end{eqnarray}
Since $g_{B}(\psi_* X_{1},\psi_* EV_{1})=\lambda^{2}g_{N}(X_{1},EV_{1})=0,$  $\{\psi_* X_{1},\psi_* EV_{1}\}$ is linearly independent for non-zero $V_{1},X_{1}.$ Thus we get $(i)\Leftrightarrow(ii).$
\end{proof}
\begin{theorem}
Let $\psi$ be a conformal slant submersion from a cosymplectic manifold $(N,\varphi,\xi,\eta,g_{N})$ onto a Riemannian manifold $(B,g_{B})$. Then,  the following assertions are equivalent to each other;
\begin{enumerate}
\item [(i)] the map $\psi$ is a totally geodesic map.
\item [(ii)] (a) $e(T_{V_{1}}DV_{2}+h\nabla_{V_{1}}EV_{2})+E(T_{V_{1}}EV_{2}+\hat{\nabla}_{V_{1}}DV_{2})=0,$ (b) $\psi$ is a horizontally homothetic map,
(c) $e(A_{X_{1}}DV_{1}+h\nabla_{X_{1}}EV_{1})+E(A_{X_{1}}EV_{1}+v\nabla_{X_{1}}DV_{1})=0$
\end{enumerate}
for $X_{1}\in\Gamma((ker\psi_{*})^\perp)$ and $V_{1},V_{2}\in\Gamma(ker\psi_{*})$.
\end{theorem}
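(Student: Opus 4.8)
The plan is to use the standard characterization that $\psi$ is totally geodesic precisely when its second fundamental form $(\nabla\psi_*)$ vanishes on every pair of vector fields on $N$. Since $(\nabla\psi_*)$ is symmetric and $TN=\ker\psi_*\oplus(\ker\psi_*)^\perp$, I would first reduce the problem to three cases: both arguments vertical, both horizontal, and one of each. The three items (a), (b), (c) of assertion (ii) will turn out to correspond exactly to these three cases, so the desired equivalence follows once each case is analysed separately.

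For the vertical--vertical case I would start from Lemma 3.2(b), which gives $(\nabla\psi_*)(V_1,V_2)=-\psi_*(T_{V_1}V_2)$. Because $\psi_*$ is injective on the horizontal space and $T_{V_1}V_2$ is horizontal, this quantity vanishes iff $T_{V_1}V_2=0$. To connect this with (a), I would differentiate $\varphi V_2=DV_2+EV_2$ using the cosymplectic identity (\ref{e.q:2.3}) in the form $\nabla_{V_1}\varphi V_2=\varphi\nabla_{V_1}V_2$, split both sides using (\ref{e.q:2.7}) and (\ref{e.q:2.8}) into their horizontal and vertical parts, and then apply $\varphi$ once more. The decisive point is that $\varphi^2=-I+\eta\otimes\xi$ together with $\xi$ being vertical forces the horizontal component of $\varphi(\nabla_{V_1}\varphi V_2)=\varphi^2\nabla_{V_1}V_2$ to equal $-T_{V_1}V_2$; on the other hand that same horizontal component is, after sorting the projections, exactly the left-hand side of (a). Hence (a) holds iff $T_{V_1}V_2=0$, i.e. iff $(\nabla\psi_*)(V_1,V_2)=0$.

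The horizontal--vertical case is handled in the same spirit: Lemma 3.2(c) reduces $(\nabla\psi_*)(X_1,V_1)=0$ to $A_{X_1}V_1=0$, and differentiating $\varphi V_1=DV_1+EV_1$ along $X_1$ via (\ref{e.q:2.9}), (\ref{e.q:2.10}) and applying $\varphi$ again identifies the horizontal component of $\varphi^2\nabla_{X_1}V_1$, namely $-A_{X_1}V_1$, with the left-hand side of (c). For the horizontal--horizontal case I would invoke Lemma 3.2(a): the expression $X_1(\ln\lambda)\psi_*X_2+X_2(\ln\lambda)\psi_*X_1-g_N(X_1,X_2)\psi_*(\nabla\ln\lambda)$ vanishes for all horizontal $X_1,X_2$ exactly when $\nabla\ln\lambda$ has no horizontal component, that is, when $\psi$ is horizontally homothetic, which is (b). Collecting the three equivalences yields $(i)\Leftrightarrow(ii)$.

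The step I expect to require the most care is the horizontal--horizontal case (condition (b)): the forward implication is immediate, but the converse---that vanishing of $(\nabla\psi_*)$ on horizontal pairs forces $h(\mathrm{grad}\,\lambda)=0$---needs the linear independence of $\psi_*X_1,\psi_*X_2$ for an orthonormal horizontal pair in order to separate the coefficients $X_1(\ln\lambda)$ and $X_2(\ln\lambda)$. The only other thing to watch is the bookkeeping of the vertical and horizontal projections in the two differentiation computations, together with checking that the $\xi$-direction (where $D\xi=E\xi=0$ and $T_{V_1}\xi=0$ by (\ref{e.q:3.4})) is consistent with the stated conditions.
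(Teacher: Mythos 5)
Your proposal is correct and follows essentially the same route as the paper: split $(\nabla\psi_*)$ into the vertical--vertical, horizontal--horizontal and mixed cases, handle the horizontal--horizontal case with Lemma 3.2(a), and for the other two cases expand $\varphi\nabla\varphi$ through the O'Neill decompositions (\ref{e.q:2.7})--(\ref{e.q:2.10}) and (\ref{e.q:3.1})--(\ref{e.q:3.2}) to identify the horizontal component with the expressions in (a) and (c). Your intermediate reformulation via $T_{V_1}V_2=0$ and $A_{X_1}V_1=0$ is just a repackaging of the paper's computation $(\nabla\psi_*)(V_1,V_2)=\psi_*(\varphi\nabla_{V_1}\varphi V_2)$, and your coefficient-separation argument for (b) matches the paper's polarization step.
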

\begin{proof}
Given $V_{1},V_{2}\in\Gamma(ker\psi_{*})$, using (\ref{e.q:2.2}), (\ref{e.q:2.3}) and (\ref{e.q:2.12}) we obtain $(\nabla \psi_{*})(V_{1},V_{2})=\psi_{*}(\varphi\nabla_{V_{1}}\varphi V_{2}).$
Using (\ref{e.q:2.7}), (\ref{e.q:2.8}), (\ref{e.q:3.1}) and (\ref{e.q:3.2}) we obtain
\begin{align*}
\nabla \psi_{*})(V_{1},V_{2})&=\psi_{*}(\varphi(\nabla_{V_{1}}DV_{2}+EV_{2})))\\
&=\psi_{*}(\varphi(T_{V_{1}}DV_{2}+\hat{\nabla}_{V_{1}}DV_{2}+T_{V1}EV_{2}+h\nabla_{V_{1}}EV_{2}))\\
&=\psi_{*}(dT_{V_{1}}DV_{2}+eT_{V_{1}}DV_{2}+D\hat{\nabla}_{V_{1}}DV_{2}+E\hat{\nabla}_{V_{1}}DV_{2}\\
&+DT_{V1}EV_{2}+ET_{V1}EV_{2}+d\nabla_{V_{1}}EV_{2}+e\nabla_{V_{1}}EV_{2})\\
&=\psi_{*}(eT_{V_{1}}DV_{2}+E\hat{\nabla}_{V_{1}}DV_{2}+ET_{V1}EV_{2}+e\nabla_{V_{1}}EV_{2}).
\end{align*}
Thus,  we have
\begin{eqnarray}
(\nabla \psi_{*})(V_{1},V_{2})=0\Leftrightarrow eT_{V_{1}}DV_{2}+E\hat{\nabla}_{V_{1}}DV_{2}+ET_{V1}EV_{2}+e\nabla_{V_{1}}EV_{2}=0.\label{e.q:3.32}
\end{eqnarray}
We claim that $\psi$ is a horizontally homothetic map if and only if $(\nabla \psi_{*})(X_{1},X_{2})=0$ for $X_{1},X_{2}\in\Gamma((ker\psi_{*})^\perp).$
From Lemma 3.2, we obtain
\begin{eqnarray}
(\nabla \psi_*)(X_{1},X_{2})=X_{1}(\ln\lambda)\psi_* X_{2}+X_{2}(\ln\lambda)\psi_* X_{1}-g_{N}(X_{1},X_{2})\psi_*(\nabla\ln\lambda) \label{e.q:3.33}
\end{eqnarray}
for $X_{1},X_{2}\in\Gamma((ker\psi_{*})^\perp)$ so that the part from left to right is obtained. Conversely, using (\ref{e.q:3.33}) we get
\begin{eqnarray}
0=X_{1}(\ln\lambda)\psi_* X_{2}+X_{2}(\ln\lambda)\psi_* X_{1}-g_{N}(X_{1},X_{2})\psi_*(\nabla\ln\lambda). \label{e.q:3.34}
\end{eqnarray}
Applying $X_{1}=X_{2}$ at (\ref{e.q:3.34}), we have
\begin{eqnarray}
0=2X_{1}(\ln\lambda)\psi_* X_{1}-g_{N}(X_{1},X_{1})\psi_*(\nabla\ln\lambda). \label{e.q:3.35}
\end{eqnarray}
Taking the inner product with $\psi_{\ast}X_{1}$ at (\ref{e.q:3.35}) we derive
$$
0=\lambda^{2}g_{N}(X_{1},X_{1})g_{N}(X_{1},\nabla\ln\lambda),
$$
which implies the result. For $X_{1}\in\Gamma((ker\psi_{*})^\perp)$ and $V_{1}\in\Gamma(ker\psi_{*})$,
using (\ref{e.q:2.2}), (\ref{e.q:2.3}) and (\ref{e.q:2.12}) we obtain $(\nabla \psi_{*})(X_{1},V_{1})=\psi_{*}(\varphi\nabla_{X_{1}}\varphi V_{1}).$\\
From (\ref{e.q:2.9}), (\ref{e.q:2.10}), (\ref{e.q:3.1}) and (\ref{e.q:3.2}) we get
\begin{align*}
\nabla \psi_{*})(X_{1},V_{1})&=\psi_{*}(\varphi(\nabla_{X_{1}}DV_{1}+EV_{1})))\\
&=\psi_{*}(\varphi(A_{X_{1}}DV_{1}+v\nabla_{X_{1}}DV_{1}+A_{X_{1}}EV_{1}+h\nabla_{X_{1}}EV_{1}))\\
&=\psi_{*}(eA_{X_{1}}DV_{1}+Ev\nabla_{X_{1}}DV_{1}+EA_{X_{1}}EV_{1}+e\nabla_{X_{1}}EV_{1}).
\end{align*}
From here,
\begin{eqnarray}
(\nabla \psi_{*})(X_{1},V_{1})=0\Leftrightarrow eA_{X_{1}}DV_{1}+Ev\nabla_{X_{1}}DV_{1}+EA_{X_{1}}EV_{1}+e\nabla_{X_{1}}EV_{1}=0.\label{e.q:3.36}
\end{eqnarray}
Thus, we have $(i)\Leftrightarrow (ii).$
\end{proof}
Finally, we consider a decomposition theorem. Denote by $N_{ker \psi_{\ast}}$ and $N_{(ker \psi_{\ast})^{\bot}}$ the integral manifolds of $ker \psi_{\ast}$ and $(ker \psi_{\ast})^{\bot}$, respectively. By Theorem 4.5 and Theorem 4.7, we have

\begin{theorem}
Let $\psi$ be a conformal slant submersion from a cosymplectic manifold $(N,\varphi,\xi,\eta,g_{N})$ onto a Riemannian manifold $(B,g_{B}).$  Then,  the following assertions are equivalent to each other;
\begin{enumerate}
\item [(i)] $(N,g_{N})$ is locally product manifold of the $N_{(ker \psi_{\ast})^{\bot}}\times N_{ker \psi_{\ast}}.$
\item [(ii)] $\begin{aligned}[t]
&\lambda^{-2}g_{B}(\nabla^{\psi}_{X_{1}}\psi_{\ast}X_{2},\psi_{\ast}EDV_{1})-\lambda^{-2}g_{B}(\nabla^{\psi}_{X_{1}}\psi_{\ast}eX_{2},\psi_{\ast}EV_{1})
=g_{N}(A_{X_{1}}dX_{2},EV_{1})\\
&+g_{N}(-X_{1}(\ln\lambda)eX_{2}-eX_{2}(\ln\lambda)X_{1}+g_{N}(X_{1},eX_{2})(\nabla\ln\lambda),EV_{1})\\
&-g_{N}(-X_{1}(\ln\lambda)X_{2}-X_{2}(\ln\lambda)X_{1}+g_{N}(X_{1},X_{2})(\nabla\ln\lambda),EDV_{1}),\\
&g_{N}(\nabla_{V_{1}}EDV_{2},X_{1})=g_{N}(T_{V_{1}}EV_{2},dX_{1})+g_{N}(h\nabla_{V_{1}}EV_{2},eX_{1})
\end{aligned}$
\end{enumerate}
for $X_{1},X_{2}\in\Gamma((ker\pi_{*})^\perp)$ and $V_{1},V_{2}\in\Gamma(ker\pi_{*})$.
\end{theorem}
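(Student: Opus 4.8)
The plan is to deduce the splitting from the local decomposition criterion recalled at the end of Section~3 (taken from \cite{P}): a Riemannian metric on a product $N_1\times N_2$ whose two canonical foliations intersect perpendicularly everywhere is a product metric if and only if both foliations are totally geodesic. In our setting the two relevant foliations are those whose leaves are the integral manifolds $N_{(\ker\psi_{\ast})^{\bot}}$ and $N_{\ker\psi_{\ast}}$. Since $\ker\psi_{\ast}$ and $(\ker\psi_{\ast})^{\bot}$ are orthogonal complementary distributions with $\ker\psi_{\ast}\oplus(\ker\psi_{\ast})^{\bot}=TN$, their leaves automatically meet perpendicularly, so the hypotheses of the criterion hold. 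Consequently, assertion (i) is equivalent to the statement that both $(\ker\psi_{\ast})^{\bot}$ and $\ker\psi_{\ast}$ define totally geodesic foliations on the total space, and it remains only to translate this pair of geometric conditions into the two displayed equations of (ii).

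The two translations are already in hand. By Theorem~4.5, the horizontal distribution $(\ker\psi_{\ast})^{\bot}$ defines a totally geodesic foliation on $N$ if and only if the first displayed equation of (ii) holds, that is, $\lambda^{-2}g_{B}(\nabla^{\psi}_{X_{1}}\psi_{\ast}X_{2},\psi_{\ast}EDV_{1})-\lambda^{-2}g_{B}(\nabla^{\psi}_{X_{1}}\psi_{\ast}eX_{2},\psi_{\ast}EV_{1})$ equals the corresponding right-hand side in $g_{N}(A_{X_{1}}dX_{2},EV_{1})$ and the logarithmic-dilation terms. By Theorem~4.7, the vertical distribution $\ker\psi_{\ast}$ defines a totally geodesic foliation if and only if the second displayed equation $g_{N}(\nabla_{V_{1}}EDV_{2},X_{1})=g_{N}(T_{V_{1}}EV_{2},dX_{1})+g_{N}(h\nabla_{V_{1}}EV_{2},eX_{1})$ holds. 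Conjoining these two equivalences gives: (i) holds $\Leftrightarrow$ both foliations are totally geodesic $\Leftrightarrow$ both equations in (ii) hold, which is exactly (ii). This establishes $(i)\Leftrightarrow(ii)$.

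I expect no genuine obstacle here: the argument is a direct assembly of Theorem~4.5 and Theorem~4.7 with the criterion from \cite{P}, precisely as the remark preceding the statement (``By Theorem 4.5 and Theorem 4.7'') anticipates. The one point worth noting is that a distribution $\mathcal{D}$ defines a totally geodesic foliation exactly when $\nabla_{X}Y\in\Gamma(\mathcal{D})$ for all $X,Y\in\Gamma(\mathcal{D})$; this single requirement already encodes both integrability, since its skew-symmetric part yields $[X,Y]\in\Gamma(\mathcal{D})$, and the vanishing of the second fundamental form of the leaves. Hence the conditions supplied by Theorems~4.5 and 4.7 deliver integrability and total geodesicity simultaneously, and no separate integrability assumption on either distribution is needed to apply the decomposition result.
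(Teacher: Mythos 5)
Your proposal is correct and follows exactly the route the paper intends: it assembles Theorem~4.5 (the horizontal distribution defines a totally geodesic foliation iff the first equation of (ii) holds) and Theorem~4.7 (the vertical distribution defines a totally geodesic foliation iff the second equation holds) with the Ponge--Reckziegel criterion recalled at the end of Section~3, which is precisely what the paper's one-line remark ``By Theorem 4.5 and Theorem 4.7, we have'' indicates. Your additional observation that total geodesicity of a distribution already entails its integrability, so that the integral manifolds $N_{\ker\psi_{\ast}}$ and $N_{(\ker\psi_{\ast})^{\bot}}$ exist without a separate hypothesis, is a welcome clarification of a point the paper leaves implicit.
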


\end{document}